\newcommand{\lra}{{\longrightarrow }}
\newcommand{\cO}{{\mathcal O}}
\newcommand{\cM}{{\mathcal M}}
\newcommand{\cF}{{\mathcal F}}
\newcommand{\cA}{{\mathcal A}}
\newtheorem{theorem}[equation]{Theorem}
\newtheorem{corollary}[equation]{Corollary}
\newtheorem{proposition}[equation]{Proposition}
\newtheorem{lemma}[equation]{Lemma}
\newtheorem{thm}{Theorem}[section]
\newtheorem{cor}[thm]{Corollary}
\newtheorem{lem}[thm]{Lemma}
\newtheorem{prop}[thm]{Proposition}
\theoremstyle{definition}
\newtheorem{defn}[thm]{Definition}
\newtheorem{rem}[thm]{Remark}
\numberwithin{equation}{section}
\newcommand{\PP}{\mathbb P}
\newcommand{\ra}{\rightarrow}
\newcommand{\cP}{\mathcal{P}}
\newcommand{\cR}{\mathcal{R}}
\DeclareMathOperator{\Aut}{{Aut}}
\DeclareMathOperator{\Hom}{{Hom}}
\DeclareMathOperator{\Ker}{Ker}
\DeclareMathOperator{\Pic}{Pic}
\DeclareMathOperator{\Nm}{{Nm}}
\DeclareMathOperator{\Supp}{Supp}
\DeclareMathOperator{\Spec}{Spec}
\DeclareMathOperator{\Sym}{Sym}
\DeclareMathOperator{\Sing}{Sing}
\DeclareMathOperator{\rk}{rk}
\title[Generic injectivity of the Prym map]{Generic injectivity of the Prym map for double ramified coverings }
\author{ Juan Carlos Naranjo,  Angela Ortega \\  (with an appendix by Alessandro Verra)}
\address{J.C. Naranjo \\ Departament de Matem\`atiques i Inform\`atica \\ Universitat de Barcelona \\Spain }
\email{jcnaranjo@ub.edu}
\address{A. Ortega \\ Institut f\"ur Mathematik, Humboldt Universit\"at zu Berlin \\ Germany}
\email{ortega@math.hu-berlin.de}
\address{A. Verra \\ Dipartimento di Matematica, Universit\`a Roma Tre \\ Italy}
\email{verra@mat.uniroma3.it}
\thanks{The first author was partially supported by the Proyecto de Investigaci\'on MTM2015-65361-P}
\subjclass{14H40, 14H30}
\begin{document}

\begin{abstract} 
In this paper we consider  the Prym map for double coverings of curves of genus $g$ ramified at $r>0$ points. That is, the 
map associating to a double ramified covering its Prym variety.  The generic
Torelli theorem  states that the Prym map is generically injective as soon as the dimension of the space of coverings is less or 
equal to the dimension of the space of polarized abelian varieties. We prove the generic injectivity of the Prym map 
in the cases of double coverings of curves with:  (a)  $g=2$, $r=6$, and (b) $g= 5$, $r=2$.
In the first case the proof is constructive and  can be extended to the range $r\ge \max \{6,\frac 23(g+2) \}$.
For (b)  we study the fibre along the locus of the intermediate Jacobians of cubic threefolds to conclude the generic injectivity. 
This completes the work of Marcucci and Pirola who proved this theorem for all the other cases, except for
the bielliptic case $g=1$ (solved later by Marcucci and the first author), and the case  $g=3, r=4$ considered previously by Nagaraj and Ramanan, and also by Bardelli, Ciliberto and Verra where the degree of the map is $3$.

The paper closes with an appendix by Alessandro Verra with an independent result, the rationality of the moduli space of coverings with $g=2,r=6$, whose proof is 
self-contained. 
\end{abstract}

\maketitle

It is well known that a general Prym variety of dimension at least $6$  is the Prym variety of a unique \'etale covering. This is the 
so-called generic Torelli Theorem for Prym varieties. In a modular way this result can be reformulated as the generic injectivity of the
 Prym map 
\[
\mathcal R_g \lra \mathcal A_{g-1}
\]
sending an \'etale covering in $\mathcal R_g$ to its Prym variety which turns out to be a principally polarized abelian variety. In the
 last years, starting with the seminal work by Marcucci and Pirola (see \cite{mp}), the ramified case has attracted attention and 
the corresponding generic Torelli problem has shown to be a natural problem plenty of rich geometry. In the mentioned paper generic
 Torelli is proved for a big amount of cases using degeneration techniques and only a few special situations remained open. Our purpose
  is to prove the generic Torelli theorem in these last cases.

In order to establish our results more precisely let us define the basic objects that take part in the main statements.
Let $C$ be an irreducible smooth complex projective curve of genus $g$ and let $\pi: D \ra C $ be a smooth double covering ramified in $r>0$ points. The associated Prym variety to $\pi$ is defined as
$$
P(\pi):= \Ker \Nm_{\pi } \subset JD.
$$
This is an abelian subvariety of dimension $g-1+\frac{r}{2}$ with induced polarization $\Xi$ of type 
$$
\delta=(1,\ldots, 1, \underbrace{2, ...,2}_{g \textnormal{ times }} \ ).
$$
Given a divisor $B$ in $C$ of even degree $r>0$ with no multiple points  and  
a line bundle $\eta \in \Pic^{r/2}(C)$ with $B \in  |\eta^{\otimes 2}|$,   the projection 
$$
\pi: D = \boldsymbol{ \Spec } (\cO_C \oplus \eta^{-1}) \ra \boldsymbol{\Spec }\cO_C =C. 
$$
defines a double covering branched over $B$. Conversely, every double ramified covering of $C$ arises in this way. 
Hence these coverings are parametrized by the moduli space 
$$
\cR_{g,r}:=\{ (C, \eta,  B) \ \mid \ \eta \in \Pic^{\frac{r}{2}}(C), B \text{ reduced divisor in } |\eta^{\otimes 2}|  \}.
$$
Let $\cA^\delta_{g-1+\frac{r}{2}}$ denote the moduli space of abelian varieties of dimension  $g-1+\frac{r}{2}$ and polarization of 
type $\delta $.
For any $g\ge 1$ and $r>0$ we define the Prym map  by

\begin{eqnarray*}
\cP_{g,r} : \cR_{g,r} & \ra & \cA^\delta_{g-1+\frac{r}{2}}\\
 (\pi : C \ra D) & \mapsto & ( P (\pi) , \Xi).
\end{eqnarray*}

The codifferential of $\cP_{g,r}$ at a generic point $[(C, \eta, B)]$  is given by the multiplication map
$$
d\cP_{g,r}^* : \Sym^2H^0(C, \omega_C \otimes \eta) \ra H^0(C, \omega_C^2 \otimes \cO(B))
$$
which is known to be surjective (\cite{lo}), therefore $\cP$ is generically finite, if and only if 
$$
\dim \cR_{g,r} \leq \dim \cA^{\delta}_{g-1+\frac{r}{2}}.
$$
So, by an elementary count of parameters  we get that the Prym map is generically finite as soon as one of the following conditions holds: 
either $r\geq 6$ and $g\geq 1$, or $r=4$ and $g\geq 3$, or $r=2$ and $g\geq 5$. 
 
Marcucci and Pirola (\cite{mp}), and later Marcucci and the first author (\cite{mn}), have proved the generic injectivity in all the
 cases except three: 
\begin{enumerate}
 \item [(a)] $r=4, g=3$, 
 \item [(b)] $r=6, g=2$,
 \item [(c)] $r=2, g=5$.
\end{enumerate}
Case (a) was considered previously by Nagaraj and Ramanan, also by Bardelli, Ciliberto and Verra (see \cite{nr}, \cite{bcv}). 
They proved (in different ways) that  the degree of the Prym map is $3$ and is the only case where the map is generically finite but the generic Torelli Theorem fails.

This paper has two goals: to prove generic Torelli Theorem for the remaining cases (b) and (c), and to give a constructive proof of the generic 
Torelli Theorem for  $r\ge \max \{6, \frac 23 (g+2)\}$.  More precisely we prove (Theorem \ref{constructive_Torelli}):

\begin{thm}  \label{constructive_Torelli_intro}
  Let $(C, \eta , B)$ be a generic element in $\mathcal R_{g,r}$ and let $(P, \Xi)$ its Prym variety. Assume that  $r\ge \max \{6, \frac 23 (g+2)\}$.
  Then the element $(C, \eta, B)$ is uniquely determined by the base locus $Bs$ of the linear system $| \Xi |$.
\end{thm}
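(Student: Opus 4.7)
The plan is to give a constructive recipe for $(C, \eta, B)$ starting from the subscheme $Bs \subset P$, via the Abel--Prym image of $D$. Let $\phi \colon D \to P$ be the Abel--Prym map $x \mapsto \cO_D(x - \sigma(x))$, where $\sigma$ is the involution of the covering; note that $\phi$ lands in $P$ for $r > 0$, since the norm of $x - \sigma(x)$ is trivial. The central geometric claim is that $Bs$ coincides, up to a controlled finite set, with $\phi(D)$. The inclusion $\phi(D) \subseteq Bs$ follows from identifying the $2^g$ independent sections of $|\Xi|$ with restrictions to $P$ of suitable translates of the Riemann theta divisor on $JD$, and applying a Riemann--Kempf-type vanishing criterion: each such section vanishes along $\phi(D)$ because the divisor $x - \sigma(x)$ always satisfies the relevant effectivity condition. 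For the reverse inclusion I would use the surjectivity of the codifferential
\[
\Sym^2 H^0(C, \omega_C \otimes \eta) \lra H^0(C, \omega_C^{\otimes 2}(B))
\]
recalled above, combined with the numerical bound $r \ge \tfrac{2}{3}(g+2)$, to show that the global sections of $|\Xi|$ separate points away from $\phi(D)$ scheme-theoretically.

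Once the equality $Bs = \phi(D)$ is in hand, the reconstruction of $(C, \eta, B)$ is straightforward. Since $\phi$ contracts all $r$ ramification points of $\pi$ to the origin $0 \in P$, the image $\phi(D)$ has a singularity at $0$. The smooth curve $D$ is recovered as the normalization of $\phi(D)$, and the $r$ preimages of $0$ are canonically identified with the ramification points of $\pi$. The covering involution $\sigma$ is reconstructed by extending to $D$ the restriction to $\phi(D) \setminus \{0\}$ of the multiplication-by-$(-1)$ automorphism of $P$, since the relation $\phi \circ \sigma = -\phi$ holds by construction (for $r>0$ no base-point normalization is needed). Setting $C := D/\sigma$ recovers the base curve, the image of the ramification divisor in $C$ gives the branch divisor $B$, and $\eta \in \Pic^{r/2}(C)$ is uniquely determined as the square root of $\cO_C(B)$ satisfying $\pi_* \cO_D = \cO_C \oplus \eta^{-1}$.

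The technical heart of the argument is the reverse inclusion $Bs \subseteq \phi(D)$, which is where the hypothesis $r \ge \tfrac{2}{3}(g+2)$ genuinely enters. The inclusion $\phi(D) \subseteq Bs$ is essentially a counting/vanishing statement, but a priori $Bs$ could contain additional higher-dimensional components or spurious isolated points and spoil the reconstruction. The numerical bound should be precisely the threshold at which enough sections of $|\Xi|$ exist to cut out the Abel--Prym curve cleanly away from $\phi(D)$; in spirit, it measures when $\omega_C \otimes \eta$ (and the associated symmetric square) becomes positive enough to control the complement of $\phi(D)$. The bound $r \ge 6$ is imposed only to guarantee generic finiteness of $\cP_{g,r}$, as already observed. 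A secondary, milder point to check is that $\phi$ is birational onto its image for a generic element of $\cR_{g,r}$, so that Step 2 unambiguously returns $D$; this reduces to ruling out $g^1_2$'s on $D$ and is standard in the generic regime considered here.
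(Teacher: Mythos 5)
There is a genuine gap: your central geometric claim, that $Bs$ coincides with the Abel--Prym image $\phi(D)$ up to a finite set, is false. The paper computes the base locus explicitly (Proposition~\ref{base-locus}): after translating to $P^{can}=\Nm_\pi^{-1}(\omega_C\otimes\eta)$, one has
\[
\widetilde{Bs}=B_0=\{\,\pi^*(A)(p_1+\dots+p_{\frac r2-2}) \mid A\in\Pic^g(C),\ p_i\in D,\ \Nm L\cong\omega_C\otimes\eta\,\},
\]
equivalently the locus where $\pi_*L$ fails to be semistable. This locus is birational to a $2^{2g}$-sheeted \'etale cover of the symmetric product $D^{(\frac r2-2)}$, hence has dimension $\frac r2-2$. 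For $r\ge 8$ it is not even a curve, and already in the boundary case $g=2$, $r=6$ it is an irreducible curve of genus $81$ (Remark~\ref{key-obs}), whereas $D$ has genus $6$. So $Bs$ cannot be the Abel--Prym curve, and everything you build on that identification --- normalizing $\phi(D)$ at its singular point, reading off the $r$ ramification points as preimages of $0$, extending $(-1)$ to recover $\sigma$ --- collapses. Your proposed ``reverse inclusion'' $Bs\subseteq\phi(D)$ is exactly the step that is wrong, and no amount of positivity of $\omega_C\otimes\eta$ will rescue it, because the extra components are genuinely there: any $L=\pi^*(A)(\sum p_i)$ with $\deg A=g$ satisfies $h^0(D,L\otimes\pi^*\alpha)\ge h^0(C,A\otimes\alpha)>0$ for all $\alpha\in JC$, which is precisely the base-point condition.

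You have also misdiagnosed where the numerical hypothesis enters. In the paper, $r\ge\max\{6,\tfrac23(g+2)\}$ is used not to separate points away from a curve, but to show that the parametrization $W\to B_0$, $(A,\sum p_i)\mapsto\pi^*(A)(\sum p_i)$, is birational --- concretely, that for generic $\eta$ the generic $A$ occurring in $B_0$ has $h^0(C,A)=1$, which is proved by an intersection-theoretic argument forcing $W_\eta\not\subset W^1_g(C)$. The correct reconstruction then proceeds by taking the quotient of $Bs$ by $\Ker(\lambda_\Xi)\cong JC_2$ to obtain a variety birational to $D^{(\frac r2-2)}$ carrying the involution induced by the symmetry of $Bs$, and invoking Martens' extended Torelli theorem to recover $D$, $\sigma$, and hence $C=D/\langle\sigma\rangle$. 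None of these ingredients (the pushforward to $SU_C(2,\omega_C)$ and the theta map, the identification $\widetilde{Bs}=B_{nss}=B_0$, the quotient by $JC_2$, Martens' theorem) appears in your sketch, so the proposal is not a repairable variant of the paper's argument but rests on an incorrect description of $Bs$.
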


In particular, this establishes  the  generic injectivity of the Prym map $\cP_{2,6}: \mathcal R_{2,6} \ra \mathcal A_4^{(1,1,2,2)}$ (Corollary \ref{corolario-Torelli}).
The image of the Prym map gives a divisor in $\mathcal A_4^{(1,1,2,2)}$, which is invariant under the natural involution in  $\mathcal A_4^{(1,1,2,2)}$
(see \cite{ps}). This divisor could be useful to understand the nature of the birational geometry of $\mathcal A_4^{(1,1,2,2)}$, for instance to compute
its Kodaira dimension, which seems to be unknown.

Recall that in the case of \'etale double coverings there are two different constructive proofs  of the generic 
injectivity  of  the Prym map: one due to Welters, which uses degeneration methods and works for genus $g\geq 16$ and another one by Debarre, where
the covering is reconstructed from the tangent cones to the stable singularities of  theta divisor of $P$, this holds for curves of genus $g \geq 7$ (see \cite{d} and \cite{w}).
 In the same spirit but with a different method, we present in Theorem \ref{constructive_Torelli_intro} a procedure to reconstruct the covering 
for $r\ge \max \{6, \frac 23 (g+2)\}$. This gives another proof of Marcucci and Pirola results in this range. It also provides
a different proof for the case $r \geq 6$, $g=1$, which was proven in \cite{mn}.
The idea of the proof of Theorem \ref{constructive_Torelli_intro}  is to show that the base locus of the map 
$P \dashrightarrow |\Xi|^{\vee}$ is symmetric and invariant by the action of the kernel of the isogeny $\lambda_{\Xi} :P\lra P^{\vee}$. 
The quotient by this action turns out to be, under some hypothesis of genericity, birational to the symmetric product $ D^{(\frac r2 -1)}$. 
Moreover the symmetry of the base locus induces an involution on $D^{(\frac r2 -1)}$. Then an extended Torelli theorem proved by Martens allows us to conclude.

The second part of the paper is devoted to prove the generic injectivity in case $(c)$. The main result of Section 2 is the following (Theorem \ref{gen_injective}):

\begin{thm} \label{injective-52}
The Prym map $\cP: \cR_{5,2} \ra \cA_{5}$ is generically injective.
\end{thm}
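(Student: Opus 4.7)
My plan is to prove that the generically finite map $\cP_{5,2}$ has generic degree equal to $1$ by exhibiting a specific fiber consisting of a single point at which the differential of $\cP_{5,2}$ is an isomorphism. Combined with the surjectivity of the codifferential recalled in the introduction, this will force the generic degree to be $1$.

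The approach is to specialize to the $10$-dimensional locus $\cJ\subset \cA_5$ of (principally polarized) intermediate Jacobians of smooth cubic threefolds. Note first that the Prym polarization of type $(2,2,2,2,2)$ on a $5$-dimensional abelian variety is twice a principal polarization, so $\cP_{5,2}$ lands naturally in $\cA_5$ and the inclusion $\cJ\subset \cP_{5,2}(\cR_{5,2})$ is a meaningful statement. The first step is to verify this inclusion by constructing, for a generic smooth cubic threefold $X$, an explicit covering $(C,\eta,B)\in \cR_{5,2}$ whose Prym variety is isomorphic to $(IJ(X),\Theta)$. A natural source of such coverings comes from classical Prym-theoretic constructions on $X$: one starts from an appropriate curve on $X$ (for instance a smooth conic, or a line equipped with an auxiliary datum) and uses an associated conic bundle or projection to produce the desired double cover, the two ramification points of which arise naturally from the chosen geometric object.

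The heart of the argument is to prove that, for generic $X$, the full fiber $\cP_{5,2}^{-1}([IJ(X)])\subset \cR_{5,2}$ consists of a single isomorphism class. Here the rigidity of cubic threefolds is decisive: by the Clemens--Griffiths theorem, $X$ is recovered from $(IJ(X),\Theta)$ as the projectivized tangent cone to the unique singular point of its theta divisor, so $X$ itself is intrinsic to the Prym variety. It then remains to show that the additional covering data $(C,\eta,B)$ is likewise intrinsic, equivalently, that the auxiliary choice on $X$ used in step one is determined up to $\Aut(X)$ by the Prym structure. I expect this uniqueness statement to be the main technical obstacle; it will presumably require a careful analysis of the Abel--Jacobi images of the relevant curves on $X$ inside $IJ(X)$ and their interaction with the Prym principal polarization, possibly complemented by a monodromy argument on the natural parameter space (the Fano surface, a Hilbert scheme of conics, or similar).

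Granting the singleton fiber, the conclusion is immediate: on a dense open subset of $\cR_{5,2}$ the differential of $\cP_{5,2}$ is an isomorphism (since the codifferential is surjective), so the map is \'etale there; arranging $X$ to be sufficiently generic so that the corresponding $(C,\eta,B)$ lies in this open subset, the local degree of $\cP_{5,2}$ at $(C,\eta,B)$ equals $1$, and by irreducibility of $\cR_{5,2}$ the generic degree of $\cP_{5,2}$ is $1$, completing the proof of generic injectivity.
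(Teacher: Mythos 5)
Your choice of special locus --- the intermediate Jacobians of cubic threefolds --- is the same as the paper's, but the central claim of your plan is false: for a generic smooth cubic threefold $V$, the fibre $\cP_{5,2}^{-1}([JV])$ is \emph{not} a single point, it is a whole curve. Via Beauville's identification of $\cR_{5,2}$ with the boundary divisor $\Delta^n\subset\overline{\cR}_6$ of admissible coverings, the fibre of $\overline{\cP_6}$ over $JV$ is the Fano surface $F(V)$ (Donagi--Smith), and its intersection with $\Delta^n$ is the curve $\Gamma\subset F(V)$ of lines $l$ whose discriminant quintic $Q_l$ is nodal; Murre's results show $\Gamma$ is one-dimensional for every smooth $V$. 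Each $l\in\Gamma$ gives a distinct nodal plane quintic $Q_l$, hence a distinct point of $\cR_{5,2}$ with Prym variety $JV$. Consequently your ``heart of the argument'' (uniqueness of the covering datum up to $\Aut(X)$) cannot be proved, and your final step also collapses: since the fibres over this locus are positive-dimensional, the differential of $\cP_{5,2}$ has nontrivial kernel at every point over $\cJ$, so you cannot arrange the map to be \'etale at the point you construct. There is also a secondary issue: even with a genuine reduced singleton fibre, concluding ``generic degree $1$'' requires properness of the map (otherwise other sheets may escape to the boundary of $\cR_{5,2}$), which is exactly why the paper works with the proper extension $\overline{\cP_6}$ restricted to the closed divisor $\Delta^n$.

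What the paper does instead is compute the \emph{local degree} of $\overline{\cP_6}|_{\Delta^n}$ along the positive-dimensional fibre locus $\mathcal S=(\overline{\cP_6}|_{\Delta^n})^{-1}(\mathcal C)$, using the blowup technique of Donagi--Smith: after blowing up $\Delta^n$ along $\mathcal S$ and $\cA_5$ along $\mathcal C$, the degree equals the degree of the induced map between exceptional divisors, which sends $(\tilde Q_l,Q_l,m)$ to $(V,\,l\vee m)$. The theorem then reduces to the concrete projective statement that for generic $l\in\Gamma$ and a generic hyperplane $H\supset l$, no other line of $\Gamma$ lies in $H$, which is proved by a dimension count on the family of hyperplanes containing a line of $\Gamma$. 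If you want to pursue your outline, this local-degree machinery is the missing ingredient that replaces your singleton-fibre claim.
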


The moduli space $\cR_{5,2}$ can be embedded into the Beauville partial compactification $\overline{\cR}_{6}$ of admissible  \'etale double coverings of genus 6 curves, by identifying the two branch points on the covering and on the base curve. The closure of the image of   $\cR_{5,2}$ is an irreducible divisor $\Delta^n$ in the boundary of $\overline{\cR}_{6}$ (denoted by 
$\Delta_0^{\mbox{ram}}$ in \cite{fgsv}).  Using the definition of the Prym map for admissible coverings one deduces from the main theorem in \cite{ds}
that the degree of  the Prym map  $\cP_6 :\cR_{6} \ra \cA_5 $ restricted to $\Delta^n$ is $1$, although the total degree of the map is  $27$. 
The class of the image of $\Delta^n$ under the Prym map in $\cA_5$ has been computed in \cite{fgsv} (Theorem 0.8) but that does not give the 
degree of the restriction map to $\Delta^n$.

For the proof of Theorem \ref{injective-52} we use the description of the fibres of the Prym map $\cP_6$  over the locus of intermediate Jacobians of cubic threefolds
(after a suitable blowup) intersected  with the boundary divisor $\Delta^n$, and exploit the geometry of the cubic threefolds.

\vskip 5mm
As a summary, the previous results together with the main theorems in \cite{mp}, \cite{mn} and \cite{nr} gives the following result:
\begin{thm}
 Let $\cP_{g,r}$ be a Prym map with $r>0$. We assume that 
 \[
 \dim \cR_{g,r} \le \dim \cA_{g-1+\frac r2}^\delta. 
 \]
 Then $\cP_{g,r}$ is generically injective except for $g=3, r=4$, in which case the degree is $3$.
\end{thm}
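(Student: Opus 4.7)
The statement is a compilation theorem, so the plan is to stratify the parameter region
\[
\dim \cR_{g,r}\le \dim \cA^\delta_{g-1+r/2}
\]
into the ranges where the Prym map is known to be generically finite, and then invoke the appropriate result on each stratum. As recalled in the introduction, a parameter count together with the surjectivity of the codifferential (from \cite{lo}) shows that $\cP_{g,r}$ is generically finite exactly when one of the following holds: $r\ge 6$ and $g\ge 1$; or $r=4$ and $g\ge 3$; or $r=2$ and $g\ge 5$. So my first step is to observe that the hypothesis on dimensions forces $(g,r)$ to lie in one of these three families.

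Next I would partition each family further according to which reference treats it. For $r\ge 6$ and $g\ge 2$ with $r\ge \tfrac{2}{3}(g+2)$, generic injectivity is given directly by Theorem \ref{constructive_Torelli_intro} of the present paper, which reconstructs $(C,\eta,B)$ from the base locus of $|\Xi|$. For $r\ge 6$ and $g=1$, I would appeal to \cite{mn}, and for $r\ge 6$ with $g\ge 2$ outside the range $r\ge \tfrac{2}{3}(g+2)$, generic injectivity is the content of \cite{mp}. The case $r=4$, $g\ge 3$ splits into the exceptional case $(g,r)=(3,4)$, where by Nagaraj--Ramanan \cite{nr} (and independently Bardelli--Ciliberto--Verra \cite{bcv}) the Prym map has degree $3$ and so generic injectivity fails, and the remaining cases $g\ge 4$ which are covered by \cite{mp}. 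Finally, the case $r=2$, $g\ge 5$ splits into the base case $(g,r)=(5,2)$, handled by Theorem \ref{injective-52} of the present paper via the geometry of cubic threefolds, and the cases $g\ge 6$, which are in \cite{mp}.

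With this bookkeeping the proof is essentially a case-by-case citation; the only substantive work is confirming that the three quoted sources (plus the two theorems of this paper) together exhaust the finite-degree range, and that their hypotheses match. I would list the cases in a table to make this verification transparent. The main obstacle is purely organizational: making sure the ranges $r\ge \max\{6, \tfrac{2}{3}(g+2)\}$ of Theorem \ref{constructive_Torelli_intro} and those of \cite{mp} overlap consistently and together cover every pair $(g,r)$ with $r\ge 6$, so that no case is left unaddressed and the $(3,4)$ exception is correctly stated as the unique failure.
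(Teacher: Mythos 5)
Your proposal is correct and is essentially identical to the paper's own (implicit) proof: the paper states this theorem as a summary obtained by combining the dimension count for generic finiteness, the results of \cite{mp}, \cite{mn}, and \cite{nr}/\cite{bcv}, and the two new theorems (Theorem \ref{constructive_Torelli} for $(g,r)=(2,6)$ and Theorem \ref{gen_injective} for $(g,r)=(5,2)$). The case bookkeeping you describe matches the paper's decomposition exactly, including the identification of $(3,4)$ as the unique degree-$3$ exception.
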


The paper closes with an appendix by Alessandro Verra whith an independent result, namely the rationality of the moduli space $\cR_{2,6}$, whose 
proof is  self-contained. 

{\bf Acknowledgements:} We are grateful to G.P. Pirola and Alessandro Verra for many helpful suggestions and stimulating discussions on the subject. We thank
Herbert Lange for reading and commenting on a preliminary version of the paper. 

\section{A constructive generic Torelli Theorem for $r\ge \max \{6, \frac 23 (g+2)\}$}

 The goal of the section is to prove the following Theorem:
 
 \begin{thm}\label{constructive_Torelli}
  Let $(C, \eta , B)$ be a generic element in $\mathcal R_{g,r}$ and let $(P, \Xi)$ its Prym variety. Assume that $r\ge \max \{6, \frac 23 (g+2)\}$.
  Then the element $(C, \eta, B)$ is uniquely determined by the base locus $Bs$ of the linear system $| \Xi |$.   \end{thm}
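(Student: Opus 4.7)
The plan is to recover the covering $\pi : D \to C$, together with its covering involution $\sigma$, directly from the datum $(P, \Xi, Bs)$, following the outline indicated in the introduction. First I would verify two symmetry properties of the base locus. Choosing a symmetric representative of $\Xi$ gives an action of $-1_P$ on $H^0(P, \Xi)$ that preserves the linear system, so $Bs$ is stable under $-1_P$. Second, for every $\alpha \in K(\Xi) := \ker \lambda_\Xi$ one has $t_\alpha^*\Xi \sim \Xi$, so translation by $\alpha$ permutes $|\Xi|$ and fixes $Bs$. Since $\delta = (1,\dots,1,2,\dots,2)$ has $g$ entries equal to $2$, the group $K(\Xi) \cong (\ZZ/2\ZZ)^{2g}$ has order $2^{2g}$ and is contained in the $2$-torsion of $P$; in particular its action commutes with $-1_P$.

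The main step, and the central obstacle, is to pin down $Bs$ explicitly. Using a ramified Abel--Prym-type morphism
\[
\mu : D^{(r/2 - 1)} \lra P, \qquad E \longmapsto \cO_D(E - \sigma(E)) \otimes L_0
\]
(for a suitably chosen $L_0 \in P$), I would first show via a Riemann--Kempf-style singularity analysis of the pullback $2\Theta_{JD}|_P \sim 2\Xi$, together with the ``halving'' used to define $\Xi$, that $\mu(D^{(r/2-1)}) \subseteq Bs$. The crucial claim is then that, under the hypothesis $r \ge \max\{6, \tfrac{2}{3}(g+2)\}$, the morphism $\mu$ is birational onto its image, the $2^{2g}$ translates $\mu(D^{(r/2-1)}) + \alpha$ with $\alpha \in K(\Xi)$ are pairwise disjoint on a dense open subset, and together they exhaust $Bs$. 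This is precisely where the numerical condition enters: it is needed to keep $D^{(r/2-1)}$ small enough compared to $P$ so that the $K(\Xi)$-translates do not collide, and to rule out extraneous components of $Bs$. Granting this, the quotient yields a birational identification $Bs / K(\Xi) \simeq D^{(r/2-1)}$.

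Since $-1_P$ commutes with translations by $K(\Xi)$ and preserves $Bs$, it descends to an involution $\widetilde\iota$ on $D^{(r/2-1)}$. A direct computation, using $\mu(\sigma(E)) = -\mu(E) + 2L_0$ and the $K(\Xi)$-equivalence, identifies $\widetilde\iota$ with the natural involution $E \mapsto \sigma(E)$ induced on symmetric products by $\sigma$. At this point I would invoke Martens' extension of Torelli's theorem for symmetric products, applicable because $r \ge 6$ ensures $r/2 - 1 \ge 2$, to reconstruct the curve $D$ from the variety $D^{(r/2-1)}$; the involution $\widetilde\iota$ then supplies $\sigma$, and hence $C = D/\langle \sigma \rangle$ together with the branch divisor $B \subset C$. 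Finally, $\eta$ is read off from the decomposition $\pi_* \cO_D = \cO_C \oplus \eta^{-1}$, completing the reconstruction of $(C,\eta,B)$ from $(P, \Xi, Bs)$.
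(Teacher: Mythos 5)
Your overall architecture --- symmetry of $Bs$ under $-1_P$ and under $K(\Xi)=\Ker\lambda_\Xi$, passage to the quotient $Bs/K(\Xi)$, identification of that quotient with a symmetric product of $D$ carrying an induced involution, and then Martens' extended Torelli theorem --- is exactly the paper's. But essentially all of the mathematical content of the theorem lies in the step you explicitly concede (``Granting this\dots''): the explicit determination of $Bs$. In the paper this occupies the whole section. One translates $P$ to $P^{can}=\Nm_\pi^{-1}(\omega_C\otimes\eta)$, identifies $\varphi_\Xi$ with $L\mapsto\theta(\pi_*L)$ via the theta map on the moduli space $SU_C(2,\omega_C)$ (an embedding for generic $C$), deduces that the translated base locus $\widetilde{Bs}$ is precisely the locus where $\pi_*L$ fails to be semistable, and then computes that locus to be $B_0=\{\pi^*(A)(p_1+\cdots+p_{r/2-2})\ \mid\ A\in\Pic^g(C),\ \Nm L\cong\omega_C\otimes\eta\}$ using Mumford's exact sequence $0\to A\to\pi_*L\to A(\sum\bar p_i)\otimes\eta^{-1}\to 0$. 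None of this is present in your sketch, and it is not a routine verification.

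Moreover, the answer you assert is not the correct one. A destabilizing sub-line-bundle $A\hookrightarrow\pi_*L$ has degree at least $g$, and since $\deg L=2g-2+\frac r2$ the residual effective divisor on $D$ has degree $\frac r2-2$, not $\frac r2-1$; thus $Bs/K(\Xi)$ is birational to $D^{(\frac r2-2)}$ (check the case $g=2$, $r=6$, where $Bs$ is a curve isomorphic to $D$, not a surface). Your parametrization $E\mapsto\cO_D(E-\sigma(E))\otimes L_0$ with $E\in D^{(\frac r2-1)}$ is likewise not the shape of the base locus: the points of $B_0$ are of the form $\pi^*A(\sum p_i)$, not Abel--Prym images, and $\cO_D(E-\sigma(E))\cong\cO_D(2E)\otimes\pi^*\cO_C(-\bar E)$ is not generally of that form. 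Finally, your explanation of where the hypothesis $r\ge\max\{6,\frac23(g+2)\}$ enters (``so that the $K(\Xi)$-translates do not collide'') misses its actual role: it is used, together with genericity of $\eta$, in a Brill--Noether intersection computation ($\frac r2-2+r-g\ge0$) to guarantee that the generic $A$ occurring in $B_0$ satisfies $h^0(C,A)=1$, which is what makes the parametrizing map $W\to B_0$ birational and hence $Bs/K(\Xi)$ birational to $D^{(\frac r2-2)}$. Without that input the quotient need not be the symmetric product, and the concluding appeal to Martens' theorem cannot be made.
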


  \vskip 3mm
  We first explain in a few words how $Bs$ determines explicitly the covering $\pi: D\ra C$ attached to $(C,\eta, B)$: the kernel of the polarization map  
$\lambda_{\Xi }:P \ra P^{\vee }$ acts on $Bs$, which moreover is a symmetric subvariety of $P$. It turns out that the quotient $Bs/\Ker(\lambda_{\Xi})$ is 
birational to the symmetric product $D^{(\frac r2-2)}$ and inherits a natural involution $\sigma'$. By a theorem of Martens (see \cite{mar}) $\sigma '$ 
is induced 
by an involution $\sigma $ on $D$. Since $C=D/\langle \sigma \rangle$ the covering is determined. It is convenient to remark that Martens' proof is also 
constructible since it relies on the Boolean calculus of special subvarieties of Jacobians. An elegant cohomological proof of the same theorem was given 
by Ran in \cite{ran}.
  
  \vskip 3mm
    This Theorem reproves the main theorem in \cite{mp} under the hypothesis $r\ge \max \{6, \frac 23 (g+2)\}$ and recovers completely the result in \cite{mn}. 
Observe that our proof is constructive: we provide a procedure to recover the covering $\pi:D\ra C$ from its Prym variety. Instead, in both quoted papers, degeneration 
methods are used. It would be interesting to find also a constructive proof for the cases where the map is generically injective and $r< \frac 23 (g+2)$.
    
 % \vskip 3mm
  % Another consequence of Theorem \ref{constructive_Torelli} (and one of the main motivations to undertake this study) is that we reach one of the two cases uncovered by the previous papers on the subject: $g=2$ and $r=6$.
    
\begin{cor}\label{corolario-Torelli}
 The Prym map $\mathcal R_{2,6} \longrightarrow \mathcal A_4^{(1,1,2,2)}$ is generically injective.
\end{cor}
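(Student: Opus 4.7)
The plan is to deduce the corollary as an immediate consequence of Theorem \ref{constructive_Torelli}. First I would verify that the numerical hypothesis of the theorem is satisfied for $(g,r)=(2,6)$: since $\tfrac{2}{3}(g+2)=\tfrac{8}{3}<6$, one has $r=6\ge \max\{6,\tfrac{2}{3}(g+2)\}$, and the Prym variety of such a covering is four-dimensional with induced polarization of type $\delta=(1,1,2,2)$, so it lies in $\mathcal A_4^{(1,1,2,2)}$.

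Next I would note that the base locus $Bs$ of $|\Xi|$ is canonically attached to the isomorphism class of $(P,\Xi)$: any isomorphism of polarized abelian varieties carries $|\Xi|$ to the corresponding linear system on the target, and hence sends base locus to base locus. Consequently the subvariety $Bs\subset P$ is determined, up to isomorphism of $P$ as a polarized abelian variety, by the point $[(P,\Xi)]\in\mathcal A_4^{(1,1,2,2)}$.

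Combining this with Theorem \ref{constructive_Torelli}, for a generic $(C,\eta,B)\in\mathcal R_{2,6}$ the whole covering is recovered from the base locus of $|\Xi|$, and the base locus is in turn recovered from the isomorphism class of $(P,\Xi)$. Hence the generic fibre of $\mathcal P_{2,6}$ consists of a single point, which is exactly generic injectivity.

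There is no substantive obstacle beyond the theorem itself; the only point requiring a small remark is the interpretation of \emph{generic}. Theorem \ref{constructive_Torelli} gives the reconstruction on a dense open $U\subset \mathcal R_{2,6}$, and because $\mathcal P_{2,6}$ is generically finite in this range (as recalled in the introduction via surjectivity of the codifferential $d\mathcal P_{g,r}^{*}$), the image $\mathcal P_{2,6}(U)$ contains a dense open of $\mathcal P_{2,6}(\mathcal R_{2,6})$; over such an open the fibre has cardinality one.
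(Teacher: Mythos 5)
Your proposal is correct and follows exactly the paper's (implicit) route: the corollary is stated as a direct consequence of Theorem \ref{constructive_Torelli}, since for $(g,r)=(2,6)$ one has $\max\{6,\tfrac23(g+2)\}=\max\{6,\tfrac83\}=6\le r$ and the Prym variety is a $4$-dimensional abelian variety with polarization of type $(1,1,2,2)$. Your additional remarks --- that the base locus of $|\Xi|$ is intrinsic to the isomorphism class of $(P,\Xi)$, and the careful handling of ``generic'' via generic finiteness --- are exactly the points the paper leaves implicit, so there is nothing to add.
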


The rest of the section is devoted to  prove Theorem \ref{constructive_Torelli}.  As above we denote by $\pi:D \ra C$ the ramified double covering 
attached to $(C,\eta,B)$. Remember that the Prym variety $(P, \Xi)$ associated to the covering $\pi$ has polarization of type $(1,\ldots, 2, \ldots ,2)$, 
where $2$ appears $g$ times.

 By Riemann-Roch Theorem we have that $\dim H^0(P, \cO_P(\Xi)) = 2^g$.
Let $\varphi_{\Xi} : P \dashrightarrow \PP H^0(P, \cO_P(\Xi))^{\vee}$
be the  map defined by the polarization.  We fix a translation  $\Theta_D \subset \Pic^0(D)$ of the 
theta divisor of $JD$ and a translation of $\Theta_C$ such that $\Theta _D \cap JC=2\Theta_C$. Then we define the map
$$
\delta : P \dashrightarrow |2\Theta_C|,   \qquad  z \mapsto (\Theta_{D, z})_{|_{\pi^* JC}},
$$  
where $\Theta_{D, z}= \Theta_D + z$.   This is well-defined: indeed this is equivalent to the statement that
\[
 \left( \Theta_{D,z}-\Theta_D \right){|_{\pi^* JC}}
\]
 is linearly equivalent to the trivial divisor. In other words, that 
\begin{equation}\label{well-defined}
(\pi^*)^ {\vee}\circ \lambda _{\Theta_D} \circ i=0, 
\end{equation}
where $i$ is the embedding of $P$ in $JD$, and $\lambda _{\Theta_D}$ is the isomorphism
\[
 \lambda_{\Theta_D}: JD \lra \Pic^0(JD)
\]
sending $z$ to $\mathcal O_{JD}(\Theta_{D,z}-\Theta_D)$. To see that (\ref{well-defined}) holds we remind the following standard formula for coverings:
\[
 \Nm_{\pi }=\lambda_{\Theta_C}^ {-1} \circ (\pi ^ *)^ {\vee } \circ \lambda _{\Theta_D}.
\]
Then $(\pi^*)^ {\vee }\circ \lambda _{\Theta_D} \circ i$ is $0$ if and only if 
\[
 \lambda_{\Theta_C}^ {-1}\circ (\pi^ *)^ {\vee }\circ \lambda_{\Theta_D} \circ i=\Nm_{\pi} \circ \,i=0
\]
which is obvious.

Notice that the base locus of $\delta $ is
$$
Bs:=Bs(\delta)= \{ z \in P \ \mid \ \pi^* JC \subset \Theta_{D, z}\}.
$$
Our aim is to compute $Bs$ and to prove that under some genericity condition the covering $\pi$ can be recovered from $Bs$. First we notice that 
$\delta $ is in fact another representation of the map $ \varphi_{\Xi}$, hence $Bs$ is completely determined by the polarized Prym variety.

\begin{prop} \label{comm1}
There is a canonical isomorphism $i: \PP H^0(P, \cO_P(\Xi))^{\vee} \ra |2\Theta_C|$ making the following diagram commutative
$$
\xymatrix@C=2cm{
               & \PP H^0(P, \cO_P(\Xi))^{\vee} \ar[dd]^{\simeq}  \\
	      P \setminus Bs \ar[ur]^{\varphi_{\Xi}} \ar[dr]_{\delta}   & \\
	     & |2\Theta_C|      
    }
$$
In particular the base locus of $\varphi_{\Xi}$ coincides with the base locus of $\delta$. 
\end{prop}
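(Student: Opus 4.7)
I plan to construct $i$ via a see-saw argument on the isogeny
\[
\sigma\colon P\times JC\;\longrightarrow\; JD,\qquad (z,x)\longmapsto z+\pi^{*}x,
\]
whose kernel is $\{(-\pi^{*}x,x):x\in JC[2]\}$ of order $4^{g}$. After fixing a symmetric representative of $\Theta_D$, the theta bundle $\cO_{JD}(\Theta_D)$ restricts to $\cO_P(\Xi)$ on $P\subset JD$ and to $\cO_{JC}(2\Theta_C)$ on $\pi^{*}JC$, so the see-saw principle yields
\[
\sigma^{*}\cO_{JD}(\Theta_D)\;\simeq\; p_{1}^{*}\cO_P(\Xi)\otimes p_{2}^{*}\cO_{JC}(2\Theta_C).
\]
By K\"unneth the unique theta section $\theta_{D}$ pulls back to a non-zero element
\[
\tau\,:=\,\sigma^{*}\theta_D\,\in\, H^{0}(P,\cO_P(\Xi))\otimes H^{0}(JC,\cO_{JC}(2\Theta_C)),
\]
which I view as a linear map $\Phi_{\tau}\colon H^{0}(P,\cO_P(\Xi))^{\vee}\longrightarrow H^{0}(JC,\cO_{JC}(2\Theta_C))$. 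The candidate canonical isomorphism is $i:=\PP(\Phi_{\tau})$.

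Commutativity of the diagram is then a direct computation. For $z\in P\setminus Bs$, the section $\Phi_{\tau}(\mathrm{ev}_{z})\in H^{0}(JC,\cO_{JC}(2\Theta_C))$ is the function $x\mapsto \theta_{D}(z+\pi^{*}x)$, whose zero divisor on $JC$ pulls forward through $\pi^{*}$ to $(\Theta_{D,-z})_{|\pi^{*}JC}$. With $\Theta_D$ symmetric, the sign in $-z$ is absorbed by the common $[-1]$-invariance of both $\varphi_{\Xi}$ and $\delta$, so $i(\varphi_{\Xi}(z))=\delta(z)$. In particular the two base loci agree.

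The central point is that $\Phi_{\tau}$ is an isomorphism. Since source and target both have dimension $2^{g}$, it suffices to show that $\tau$ has maximal rank $2^{g}$ as a tensor. The subgroup $\ker\sigma$ lifts to a Lagrangian subgroup $\widetilde K$ of the theta group of $p_1^{*}\cO_P(\Xi)\otimes p_2^{*}\cO_{JC}(2\Theta_C)$, and $\tau$ is $\widetilde K$-invariant (because it descends via $\sigma$ to $\theta_D$). Writing $\tau=\sum_{i=1}^{r}u_i\otimes v_i$ with minimal $r$, the $\widetilde K$-invariance forces the span $U:=\mathrm{span}(u_i)\subset H^{0}(P,\cO_P(\Xi))$ to be stable under the entire theta group $\cG(\cO_P(\Xi))$ via the first-factor projection of $\widetilde K$, which surjects onto $\cG(\cO_P(\Xi))$ because $\ker\sigma$ surjects onto $K(\Xi)$. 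By irreducibility of the unique weight-one representation $H^{0}(P,\cO_P(\Xi))$ of this Heisenberg group, $U=H^{0}(P,\cO_P(\Xi))$, hence $r=2^{g}$ and $\Phi_{\tau}$ is an isomorphism.

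The main obstacle I expect is the careful bookkeeping of the theta-group descent: constructing the Lagrangian lift $\widetilde K$ explicitly and checking that its first-factor projection surjects onto $\cG(\cO_P(\Xi))$ in a way compatible with the identification $\pi^{*}(JC[2])=K(\Xi)$. Once that is set up, the irreducibility of the Heisenberg representation does the work and delivers both the isomorphism and its canonicity.
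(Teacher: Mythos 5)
Your argument is correct and is essentially the paper's own proof: the paper simply cites Mumford's Proposition on p.~334 of \emph{Prym varieties I} as applying verbatim, and that proof is exactly your see-saw decomposition of $\sigma^*\cO_{JD}(\Theta_D)$ followed by the Heisenberg-group irreducibility argument showing the K\"unneth tensor has full rank (here $2^g=h^0(\Xi)=h^0(2\Theta_C)$, so one gets an isomorphism rather than just an injection as in the \'etale case). The only point to tidy is the sign/normalization of the translate, which you already flag and which only changes $i$ by a linear automorphism.
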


\begin{proof}
The proof in  \cite[Proposition, p. 334]{mu} applies verbatim.  
\end{proof}

In order to compute explicitly the base locus $Bs$ it is convenient to work with a suitable translated of $P$ and of $\Xi$. We consider 
 $$
 P^{can}=  \Nm_{\pi}^{-1} (\omega_C \otimes \eta)  \subset \Pic^{2g-2+\frac r2}(D).
 $$
 Observe that $2g-2+\frac r2 =g(D)-1$.
 Then there is a canonical representative of the theta divisor
 $$
  \Xi^{can}= \Theta_D^{can} \cap P^{can}   
 $$
where $\Theta_D^{can} = W^0_{2g-2+\frac r2}(D) \subset \Pic^{g(D)-1}(D)$ is the canonical theta divisor.

Let $\kappa \in \Pic^{2g-2+\frac r2}(D)$ be such that $\Nm \kappa = \omega_C \otimes \eta$ and let $\Theta_D= \Theta^{can}_{D,-\kappa}$ be the translated theta divisor.  
Consider the translation $t_{-\kappa}: P^{can} \stackrel{\simeq}{\ra} P$, $L \mapsto L \otimes \kappa^{-1}$. 
Let $\widetilde{Bs}= t_{\kappa} (Bs)$ be the translated of the base locus in $P^{can}$, so we obtain the description:
\[
\widetilde{Bs}= t_{\kappa} (Bs)= \{ L \in P^{can} \ \mid \ \pi^*(JC) \subset \Theta^{can}_{D,-L} \}.
\]

From now on we suppose that $g\ge 2$. The case $g=1$ will be considered at the end of the proof.
We  denote by  $SU_C(2, \omega_C)$ the moduli space  of  S-equivalence classes of rank $2$
semistable vector bundles on $C$ with determinant $\omega_C$, which is a normal projective variety. 
Given a line bundle $L\in P^{can} \subset \Pic^{2g-2+\frac r2}(D)$, $\pi_*L$ is a rank $2$ vector bundle with 
$$
\det \pi_*L = \Nm (L ) \otimes \det \pi_*(\cO_D)= \omega_C \otimes \eta \otimes \det (\cO_C \oplus \eta^{-1}) = \omega_C. 
$$
 According to \cite[Lemma 1.2]{be_bsmf} there is an open set $P^{ss} \subset P^{can}$ such that $\pi_*L$ is semistable for all $L \in P^{ss}$, so
 the map $\pi_*:  P^{ss} \ra SU_C(2, \omega_C)$ is well defined. On the other hand,  there is a  {\it theta map} defined by
 $$
 \theta:   SU_C(2, \omega_C) \ra |2\Theta_C|, \qquad E \mapsto \theta(E),
 $$
 where $\theta(E)$ is a divisor whose support is $\{\alpha \in \Pic^0(C) \ \mid \  H^0(C, E \otimes \alpha) \neq 0\}$.  
 It is known that the theta map is well defined (\cite{r}), that is,  $\theta(E) $ is a divisor linearly equivalent to $2\Theta_C$,
 for every semistable rank 2 vector bundle with canonical determinant. 
 The following proposition is a consequence of several results in the literature (see for example \cite{vgi} and the references therein):
 
 \begin{prop}
 The theta map is an embedding $SU_C(2, \omega_C) \hookrightarrow |2\Theta_C|$ for a generic curve of genus $g\ge 2$.
 \end{prop}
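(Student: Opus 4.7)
The plan is to split the argument into ranges of $g$ and invoke the corresponding literature, since the theta map has a quite different flavour in each range. First I would dispatch the low genus cases: for $g=2$, a classical theorem of Narasimhan--Ramanan gives $SU_C(2,\omega_C)\cong \PP^3 \cong |2\Theta_C|$, with $\theta$ being exactly this isomorphism; for non-hyperelliptic $g=3$, Narasimhan--Ramanan (and later work) identify $\theta$ with a closed embedding onto the Coble quartic in $\PP^7$.  A generic curve of genus $3$ is non-hyperelliptic, so these two cases are settled without further work.

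For $g\ge 4$ and $C$ generic, I would verify the two pieces needed for $\theta$ to be a closed immersion: (i) set-theoretic injectivity and (ii) injectivity of the differential at every $E$ where $\theta$ is defined. For (i), on the stable locus one invokes the injectivity results of Brivio--Verra for low $g$ and of van Geemen--Izadi \cite{vgi} in the general case. The strictly semistable points are S-equivalent to direct sums $L\oplus(\omega_C\otimes L^{-1})$, whose theta divisors are sums of two translates $t_L^*\Theta_C+t_{\omega_C\otimes L^{-1}}^*\Theta_C$; a straightforward argument using the support of these reducible divisors shows they are separated from each other and from the images of stable bundles. For (ii), I would identify the codifferential of $\theta$ at a stable $E$ with the multiplication map
\[
H^0(C,E\otimes \kappa)\otimes H^0(C,E\otimes \kappa')\lra H^0(C,\omega_C\otimes \End_0 E)
\]
coming from the Serre-dual description of the tangent space $H^1(C,\End_0 E)$, and check its surjectivity via a base-point-free pencil style argument together with Petri-genericity of $C$.

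The main obstacle will be to organize (i) and (ii) uniformly across genera: set-theoretic injectivity in high genus relies on genericity in the moduli of curves to avoid ``Wirtinger-type'' coincidences among theta divisors, while infinitesimal injectivity can fail at special stable bundles where the above multiplication map drops rank. Fortunately, both pathologies are avoided precisely on a dense open subset of $\cM_g$, which is exactly what the genericity hypothesis on $C$ supplies. Since the statement is only needed as a tool for the main results of the section, I would content myself with a compilation of the existing results rather than attempt a self-contained argument.
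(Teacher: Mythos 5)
Your proposal is correct and matches the paper's approach: the paper gives no argument of its own, stating only that the proposition ``is a consequence of several results in the literature (see for example \cite{vgi} and the references therein),'' which is precisely the compilation of Narasimhan--Ramanan (low genus), Brivio--Verra (injectivity), and van Geemen--Izadi (injectivity of the differential) that you outline. Your version simply makes explicit which results are being invoked in each range of $g$, which is a reasonable elaboration of the same citation-based strategy.
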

 
In particular, every semistable rank 2-vector bundle with canonical determinant admits a theta divisor in $JC$ (\cite[Proposition 1.6.2]{r}).
Now we set $\tilde{\delta} =\delta \circ t_{-\kappa}$. With this notation we have:

 \begin{lem} \label{comm2}

 Then the  following diagram is commutative
 $$
\xymatrix@C=2cm@R=1.4cm{
                & SU_C(2, \omega_C) \ar@{^{(}->}[d]_{\theta} \\
	      P^{ss}   \ar[ur]^{\pi_*}  \ar[r]^(.5){\tilde{\delta}} \ar[d]_{t_{-\kappa}}    & |2\Theta_C| \ar[d]_{i}^{\simeq} \\
	      P \setminus Bs \ar[ur]^{\delta}  \ar[r]^(.4){\varphi_{\Xi}}& \PP H^0(P, \cO_P(\Xi))^{\vee} 	         
    }
$$

 \end{lem}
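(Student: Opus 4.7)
The plan is to handle the diagram one triangle at a time. The leftmost triangle is definitional: $\tilde\delta$ was introduced as $\delta\circ t_{-\kappa}$ in the paragraph just before the lemma. The bottom triangle is precisely the content of Proposition~\ref{comm1}, since $\varphi_{\Xi}$ and $\delta$ differ only by the isomorphism $i$ of their targets. Hence only the upper triangle carries new information, and the whole effort should go into showing that, for $L \in P^{ss}$,
\[
\tilde\delta(L) \;=\; \theta(\pi_*L) \quad \text{as elements of } |2\Theta_C|.
\]

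My first step is to unwind $\tilde\delta(L)$ in the canonical model. Since $\Theta_D^{can} = W^0_{g(D)-1}(D)$, the translate $\Theta_{D,-L}^{can}$ is set-theoretically the locus $\{M \in \Pic^0(D) \mid h^0(D, M\otimes L) > 0\}$; restricting this condition along the embedding $\pi^*: JC \hookrightarrow JD$ and identifying $\pi^*(JC)$ with $JC$ presents $\tilde\delta(L)$ as the subscheme $\{\alpha \in JC \mid h^0(D, \pi^*\alpha \otimes L) > 0\}$. The heart of the argument is then the projection formula $\pi_*(\pi^*\alpha \otimes L) \simeq \alpha \otimes \pi_*L$, combined with $H^0(D,-) = H^0(C, \pi_* -)$ for the finite morphism $\pi$: these two standard identities rewrite the preceding locus as $\{\alpha \in JC \mid h^0(C, \alpha \otimes \pi_*L) > 0\}$, which is by definition the support of the theta divisor $\theta(\pi_*L)$ of the rank two bundle $\pi_*L$ on $C$.

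The main technical point to verify — and where I would be most careful — is that this agreement of point-sets is actually an equality of effective divisors in $|2\Theta_C|$, rather than merely of supports. I would argue this by observing that both sides arise as determinantal cycles attached to the same coherent cohomology functor on $JC$, so they coincide scheme-theoretically and in particular as divisor classes. The semistability hypothesis $L \in P^{ss}$ enters exactly here: by Beauville's result cited in the text it ensures that $\pi_*L \in SU_C(2,\omega_C)$, so that $\theta(\pi_*L)$ is a genuine element of $|2\Theta_C|$ rather than all of $JC$; on the other side $\tilde\delta(L)$ already lies in $|2\Theta_C|$ by construction of $\delta$ and the identification $i$. Once the upper triangle is established, commutativity of the entire diagram follows immediately from the other two formal pieces.
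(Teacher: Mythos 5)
Your proposal is correct and follows essentially the same route as the paper: the lower triangle is delegated to Proposition~\ref{comm1}, and the upper triangle is established by the chain of identifications $\Supp\theta(\pi_*L)=\{\alpha \mid H^0(C,\alpha\otimes\pi_*L)\neq 0\}=\{\alpha\mid H^0(D,\pi^*\alpha\otimes L)\neq 0\}=(\Theta^{can}_{-L})|_{\pi^*JC}=\tilde\delta(L)$ via the projection formula. Your additional remark upgrading the equality of supports to an equality of divisors (both being determinantal cycles of the same cohomology functor) is a point the paper passes over silently, but it does not change the argument.
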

 
\begin{proof}
The commutativity of the lower triangle is shown in Proposition \ref{comm1}.
Let $L \in  P^{ss}  $, then $\pi_*L$ is a semistable rank 2 vector bundle with canonical determinant and the support of the divisor
$\theta(\pi_* L)$ is given by
\begin{eqnarray*}
\Supp \theta(\pi_*L) & = &\{ \alpha \in \Pic^0(C )\ \mid \ H^0(C, \alpha \otimes \pi_*L)  \neq 0 \} \\
& =  &  \{ \alpha \in \Pic^0(C )\ \mid \ H^0(D, \pi^* \alpha \otimes L)  \neq 0 \} \\
&  = & (\Theta^{can}_{-L})_{|{\pi^*JC}}\\
& = & \tilde{\delta} (L). 
\end{eqnarray*}
This proves the commutativity of the upper triangle. 
\end{proof} 

Observe that Lemma \ref{comm2} also shows that 
$$
\widetilde{Bs}  \subset B_{nss}:= \{ L \in P^{can}  \mid  \pi_*L  \textrm{ is not semistable} \}.
$$
On the other hand, if $\pi_*L$ is not semistable then there exists a  line subbundle $N \subset \pi_*L$ of slope $> g-1$. By Riemann-Roch 
we have $0 \neq H^0(C,N\otimes \alpha) \subseteq H^0(C, \pi_*L \otimes \alpha)$ for all $\alpha \in JC$ so  
$$
H^0(C, \pi_*L \otimes \alpha) = H^0(D, L \otimes \pi^ * \alpha ) \neq 0  \quad \forall  \alpha \in JC,
$$
that is, $\pi^*(JC) \subset \Theta_{D,-L}^{can}$ and $L \in \widetilde{Bs}$. This gives $ \widetilde{Bs}  =  B_{nss}$.
\bigskip

We consider now the subset of $P^{can}$:  
\[
B_0:= \{ L =\pi^*(A)(p_1+\dots +p_{\frac r2-2}) \ \mid \ A \in \Pic^g(C), \ p_i \in D,  \ \Nm L \cong \omega_C \otimes \eta   \}.
\]

\begin{prop} \label{base-locus}   The equality $B_0 = \widetilde{Bs}$ holds.
\end{prop}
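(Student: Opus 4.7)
The plan is to establish both inclusions of the equality $B_0=\widetilde{Bs}$, using the identification $\widetilde{Bs}=B_{nss}$ already proved above. The key degree computation is that $\deg\pi_*L=2g-2$ for $L\in P^{can}$ (via $\chi$-preservation by the finite morphism $\pi$ and $\pi_*\cO_D=\cO_C\oplus\eta^{-1}$), so $\mu(\pi_*L)=g-1$, and the failure of semistability of $\pi_*L$ is equivalent to the existence of a line subsheaf of degree at least $g$.

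For the inclusion $B_0\subseteq\widetilde{Bs}$: given $L=\pi^*A\otimes\cO_D(p_1+\dots+p_{r/2-2})$ with $A\in\Pic^g(C)$, the tautological inclusion $\pi^*A\hookrightarrow L$ on $D$ pushes forward, by the exactness of $\pi_*$ and the projection formula, to
\[
\pi_*\pi^*A \;=\; A\oplus(A\otimes\eta^{-1}) \;\hookrightarrow\; \pi_*L.
\]
The summand $A$ is a subsheaf of $\pi_*L$ of degree $g$, which already exceeds $\mu(\pi_*L)=g-1$, so $\pi_*L$ is not semistable and $L\in B_{nss}=\widetilde{Bs}$.

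For the reverse inclusion $\widetilde{Bs}\subseteq B_0$: let $L\in P^{can}$ with $\pi_*L$ not semistable, and let $N\subset\pi_*L$ be the maximal destabilizing line subbundle, of degree $d\geq g$. By adjunction $\pi^*N\hookrightarrow L$, so $L\cong\pi^*N\otimes\cO_D(F)$ for some effective divisor $F$ on $D$ of degree $2g-2+r/2-2d$. If $d=g$, set $A=N$ and $E=F$. If $d>g$, fix any $p\in C$ and put
\[
A := N\bigl(-(d-g)p\bigr)\in\Pic^g(C), \qquad E := F+(d-g)\pi^*p,
\]
which is still effective, now of degree $r/2-2$. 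Then $L=\pi^*A\otimes\cO_D(E)\in B_0$; the condition $\Nm L\cong\omega_C\otimes\eta$ is automatic since $L\in P^{can}$.

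The main subtlety sits on the side $B_0\subseteq\widetilde{Bs}$: one must ensure that the pushforward of $\pi^*A\hookrightarrow L$ genuinely exhibits $A$ as a subsheaf of $\pi_*L$ of the claimed degree (saturation is irrelevant, since any subsheaf of slope greater than $g-1$ already breaks semistability). The reverse inclusion is then straightforward bookkeeping: any excess degree in the destabilizing subbundle $N$ is traded for $\pi^*$-pullbacks of points of $C$, bringing $A$ into $\Pic^g(C)$ and lengthening the effective divisor on $D$ to the required $r/2-2$.
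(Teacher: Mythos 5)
Your proof is correct, and both inclusions ultimately rest on the same mechanism as the paper's --- the adjunction $\Hom(A,\pi_*L)\cong\Hom(\pi^*A,L)$ and the slope computation $\mu(\pi_*L)=g-1$ --- but your treatment of $B_0\subseteq\widetilde{Bs}$ takes a genuinely different route. The paper proves that inclusion directly against the cohomological definition of $\widetilde{Bs}$: it first rewrites $L=\pi^*(A')(\sum p_i)$ with $\sum p_i$ a $\pi$-simple divisor (so that $\deg A'\ge g$), invokes Mumford's exact sequence $0\to A'\to\pi_*L\to A'(\sum \bar{p_i})\otimes\eta^{-1}\to 0$, and concludes that $H^0(D,L\otimes\pi^*\alpha)\supseteq H^0(C,A'\otimes\alpha)\neq 0$ for every $\alpha\in JC$. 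You instead route through the identification $\widetilde{Bs}=B_{nss}$: the projection formula gives $A\oplus(A\otimes\eta^{-1})=\pi_*\pi^*A\hookrightarrow\pi_*L$, and the summand $A$, of degree $g>g-1$, already destabilizes $\pi_*L$. This is slightly cleaner --- it needs neither Mumford's sequence nor the preliminary reduction to a $\pi$-simple residual divisor --- at the cost of leaning on the equality $\widetilde{Bs}=B_{nss}$, which is legitimate since the paper establishes it in the discussion immediately preceding the proposition. For the reverse inclusion your argument coincides with the paper's (destabilizing line subbundle $N$, adjunction, effectivity of $L\otimes\pi^*N^{-1}$), except that you spell out the normalization step --- trading the excess degree $d-g$ of $N$ for the pullback divisor $(d-g)\pi^*p$ --- needed to land exactly in $B_0$ with $A\in\Pic^g(C)$ and an effective divisor of degree $\frac r2-2$; the paper leaves this bookkeeping implicit. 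Both of your refinements are sound and, if anything, tighten the published argument.
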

\begin{proof}
Let $L= \pi^*(A')(\Sigma p_i)  \in B_0$, where $A'$ is effective of maximal degree $\ge g$. In particular, putting $\bar{p_i} = \pi(p_i)$ we have that $\bar{p_i}\ne \bar{p_j}$ for $i\ne j$.
In order to compute $H^0(D, \pi^ * \alpha \otimes L) $ with $\alpha \in JC$, we use a short exact sequence on $C$ given by Mumford in \cite{mu} 
(see the proof of the Proposition in page 338)  which in our case reads as follows:
\begin{equation} \label{exact_seq_Mumford}
 0\,\lra A' \lra \pi_*(L) \lra A'(\Sigma \bar {p_i})\otimes \eta^{-1} \lra \, 0,
\end{equation}
where $\pi_*(L) = \pi_*(\pi^*(A')(\Sigma p_i))$. 
Tensoring with $\alpha$ we get:  
\[
H^ 0(C, A'\otimes \alpha) \subset H^ 0(C, \pi_*L \otimes \alpha)=H^0(C,\pi_*(L\otimes \pi^*(\alpha)))=H^0(D,L\otimes \pi^*(\alpha)).
\]
Since the degree of $A'\otimes \alpha $ is at least $g=g(C)$, then $H^ 0(C, A'\otimes \alpha) \neq 0$. Therefore $\pi^* (JC) \subset \Theta^{can}_{D,-L} $ 
and this proves $B_0\subset \widetilde {Bs}$.
 
Let $L \in B_{nss}$. Then there exists a line bundle $A {\hookrightarrow} \pi_*L$  with 
$$
\deg A >  g-1 = \mu(\pi_*L):= \frac{ \deg \pi_*L }{ \rk \pi_*L }.
$$ 
Since  
\[
\Hom(A,\pi_*L)\cong \Hom(\pi^ *(A),L)
\]
there is a non-trivial map $\pi^ *(A) \to L$ which is necessarily injective.  We also obtain that $h^ 0(D, L\otimes \pi^ *(A^ {-1}))>0$ and hence 
$L$ is of the form $L=\pi^*A (\sum p_i)$, $p_i \in D$. Thus we have 
\[
 \widetilde {Bs}= B_{nss} \subset B_0
\]
and we are done.
\end{proof}

We use the description of the base locus to recover the covering $\pi:D \ra C$ from the Prym variety $(P, \Xi)$.
Define the variety
$$
W:=  \{ (A,\sum_i p_i) \in \Pic^g (C) \times D^{(\frac r2-2)}  \mid \ A^{\otimes 2} (\sum_i \bar{p_i}) \simeq \omega_C \otimes \eta \},
$$
where $\bar{p_i}:=\pi(p_i)$.

\begin{prop}
 Assume that $(C,\eta)$ is general and assume also that $r\ge \max \{6, \frac 23 (g+2)\}$. Then the natural morphism $W\lra B_0$ sending 
 \[
 (A,\sum_{i=1}^{\frac{r}{2} - 2} p_i) \mapsto \pi^*(A)(\sum_{i=1}^{\frac{r}{2} - 2}  p_i)
 \]
 is birational. 
\end{prop}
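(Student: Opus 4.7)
The plan is to construct a set-theoretic inverse to $W \to B_0$ using the Mumford exact sequence
\[
0 \to A \to \pi_* L \to A(\Nm F) \otimes \eta^{-1} \to 0,
\]
associated to any representation $L = \pi^*(A)(F) \in B_0$; this is the same sequence \eqref{exact_seq_Mumford} used in the proof of Proposition \ref{base-locus}. Setting $A' := A(\Nm F) \otimes \eta^{-1}$, the condition $(A, F) \in W$, namely $A^{\otimes 2}(\Nm F) \simeq \omega_C \otimes \eta$, immediately gives $\deg A = g$ and $\deg A' = g - 2$, and the gap $\deg A - \deg A' = 2$ is the essential rigidity driving the reconstruction.

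First I would show that $A$ is intrinsically characterized by $L$ as the unique line subbundle of $\pi_* L$ of maximum degree. For any line subbundle $B \hookrightarrow \pi_* L$, the composition $B \to \pi_* L \twoheadrightarrow A'$ is either zero, in which case $B \subset A$ is a saturated rank-$1$ subsheaf of the line bundle $A$ and hence $B = A$, or non-zero, in which case $\deg B \le \deg A' = g - 2$. Thus $\deg B \le g$ with equality only for $B = A$, so $A$ is recovered from $\pi_* L$ by its Harder--Narasimhan filtration. Once $A$ is determined, $F$ is read off as the unique effective representative of the class $L \otimes \pi^* A^{-1}$: applying $\Hom(A, -)$ to the Mumford sequence and using adjunction,
\[
h^0(D, L \otimes \pi^* A^{-1}) = \dim \Hom_C(A, \pi_* L) = 1 + h^0(C, \cO_C(\Nm F) \otimes \eta^{-1}),
\]
and the latter line bundle has degree $r/2 - 2 - r/2 = -2 < 0$, hence no global sections. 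So $h^0(\cO_D(F)) = 1$ and $F$ is the unique effective divisor in the class. Combining, every $L \in B_0$ admits a unique preimage $(A, F) \in W$, so $W \to B_0$ is bijective on closed points and in particular birational.

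The only real obstacle is verifying that the Mumford sub $A$ is actually a line \emph{subbundle} (saturated) and not a proper subsheaf of a larger line subbundle; this follows from the degree computation $\deg A = g > g - 2 = \deg A'$, which depends only on $(A, F) \in W$ and on $\deg \eta = r/2$, and so requires no Brill--Noether input. The genericity of $(C, \eta)$ and the numerical hypothesis $r \ge \max\{6, \frac{2}{3}(g+2)\}$ do not seem to enter this Proposition directly; rather, they are carried over from Theorem \ref{constructive_Torelli} and used later, to identify the quotient of $B_0$ by the translation action of $\Ker(\lambda_\Xi) = \pi^*(JC[2])$ with $D^{(r/2-2)}$ and to apply Martens' theorem to the induced involution.
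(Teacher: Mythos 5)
Your reconstruction of $(A,F)$ from $L$ is essentially correct where it matters, and it is a genuinely different route from the paper's. The paper spends nearly all of its proof on a Brill--Noether argument showing that, for $(C,\eta)$ general and $r\ge\max\{6,\tfrac23(g+2)\}$, the generic $A$ occurring in $W$ satisfies $h^0(C,A)=1$; this is fed into the Mumford sequence to get $h^1(C,A)=0$, hence $h^0(D,L)=1$, and $(A,F)$ is then read off from the unique effective divisor of $|L|$. You instead recover $A$ as the maximal destabilizing line subbundle of $\pi_*L$, using only the degree gap $\deg A-\deg A'=2$ forced by the norm condition, and recover $F$ from $h^0(D,L\otimes\pi^*A^{-1})=1$, which follows from $\deg(\cO_C(\Nm F)\otimes\eta^{-1})=-2<0$ with no genericity input. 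What your approach buys is substantial: it appears to prove the proposition without the hypothesis $r\ge\tfrac23(g+2)$ and without any Brill--Noether analysis, which would be worth flagging since that hypothesis propagates into the main theorem of the section. What the paper's approach buys is the extra fact $h^0(D,L)=1$, i.e.\ information about the scheme structure of $\widetilde{Bs}$ itself rather than of $\pi_*L$.

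Two corrections are needed. First, your claim of bijectivity on all closed points is false, and the slip is exactly where you locate "the only real obstacle": saturation of $A$ in $\pi_*L$ does \emph{not} follow from the degree computation. The Mumford sequence has locally free quotient (equivalently, $A$ is saturated) precisely when $F=\sum p_i$ is $\pi$-simple, i.e.\ contains no fibre of $\pi$; if $F\supseteq\pi^*c$ the saturation of $A$ is $A(c)$ and your sequence is not exact. Injectivity genuinely fails there for $r\ge 8$: if $F=\pi^*c+F'$ then $(A(c-c'),\,\pi^*c'+F')\in W$ maps to the same $L$ for every $c'\in C$. This is harmless for the proposition --- the $\pi$-simple locus is dense open in every component of $W$ because the second projection to $D^{(\frac r2-2)}$ is \'etale, your argument shows that an $L$ admitting a simple representation admits no other representation at all, and generic injectivity plus surjectivity gives birationality --- but you must restrict to the simple locus and say so, as the paper does. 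Second, a minor point: $\dim\Hom_C(A,\pi_*L)=1+h^0(C,\cO_C(\Nm F)\otimes\eta^{-1})$ should a priori be an inequality $\le$ coming from the long exact sequence of $\Hom(A,-)$; the stated value $1$ is nonetheless correct because the correction term vanishes and the identity map gives the lower bound.
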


\begin{proof}
 The main difficulty of the proof is to show that for such  a generic element the equality $h^0(C,A)=1$ holds. To prove this  we denote by 
\[
 W_{\eta}=\{A\in \Pic^g(C) \mid h^0(C,\omega_C \otimes \eta \otimes A^{-\otimes 2}) > 0  \},
\]
which by definition is the image of the first projection $W\ra \Pic^g(C)$. Obviously if  
\[
\deg (\omega_C \otimes \eta \otimes A^{-\otimes 2})=2g-2+\frac r2 -2g =\frac r2 -2 \ge g, 
\]
then the condition on the cohomology is empty and $W_{\eta }=\Pic^g(C)$. In this case $h^0(C,A)=1$ generically for any $(C,\eta )$. This covers the 
cases $r\ge 2g+4$. Assume now that $r =2g+2$. Since the second projection $W\ra  D^{(\frac r2 -2)}$ is an \'etale covering, $\dim W_{\eta}\le \dim W =\frac r2 -2=g-1$,
 hence $W_{\eta}$ is a proper subvariety of $\Pic^g(C)$. We claim that it is not contained in the Brill-Noether locus $ W^1_g(C)$.  If we are able to prove that 
$\dim W_{\eta } =g-1$ then $W_{\eta } \not \subset W^1_g(C)\cong W^0_{g-2}(C)$. Since the cohomological condition that defines $W_{\eta }$ refers to $A^{\otimes 2}$, 
we consider $\overline {W}_{\eta}$ the image of the  \'etale map:
\[
 W_{\eta} \lra \overline {W}_{\eta} \subset \Pic^{2g}(C),\quad A \mapsto A ^{\otimes 2}.
\]
Then, an element $L \in \overline {W}_{\eta}$ satisfies 
\[
 0<h^0(C,\omega_C \otimes \eta \otimes L^{-1})=h^0(C,L\otimes \eta ^{-1})
\]
by Riemann-Roch. Then $\overline {W}_{\eta}$ is isomorphic to the translated $\eta + W^0_{g-1}(C)$ of the theta divisor on $\Pic^{g-1}(C)$. Thus for any 
$\eta \in T=\{\eta \mid \eta^{\otimes 2}\in W_{r}^0(C)\}$, $\dim W_{\eta}=g-1$, which proves the claim.

In the previous argument there is no restriction on $\eta $. In order to prove the statement for $r < 2g+2$ we shall show that for a generic $\eta $ in $T$
we have   $W_{\eta} \not \subset  W^1_g(C)$. We claim that the union of the subsets $W_{\eta}$, as $\eta $ varies in $T$, covers all $\Pic^g(C)$. 
In other words, if we define
\[ 
 I:=\{(A,\eta ) \in \Pic^g(C)\times T \mid A\in W_{\eta} \},
\]
this equivalent to show that the first projection $I \ra \Pic^g(C)$ is surjective. Let  $T=\{\eta \mid \eta^{\otimes 2}\in W_{r}^0(C)\}$ and
$T_0=\{ \eta \in T \mid |\eta^{\otimes 2}| \ \textnormal{ has a reduced divisor} \}$.
We fix an arbitrary element  $A\in \Pic^g(C)$. We ask for the existence of an $\eta \in T$ such that $h^0(C,\omega_C \otimes \eta \otimes 
A^{-\otimes 2})>0$. Then $\eta $ has to be chosen in the intersection of the Brill-Noether locus
\[
T_A:= W^0_{\frac r2-2}+(A^{\otimes 2}\otimes \omega_C^{-1})
\]
with $T$. If $r\ge g$, then $T=\Pic^{\frac r2}(C)$ and the existence of such an $\eta $ is obvious as far as 
$r\ge 6$. 
Assume from now on that $r\le g$, in particular $\dim T=\dim W^0_r(C)=r$. Since the cohomological class of $T_A$ is a fraction of a power of the theta divisor, 
the intersection $T_A\cap T$ is non empty if 
\[
 \dim T_A + \dim T -g= \frac r2 -2 +r-g\ge 0.
\]
Hence for $r\ge \max \{6, \frac 23 (g+2)\}$ we have the claimed surjectivity. Thus, for a generic $\eta \in T$, 
and therefore for a generic $\eta \in T_0$, the generic element  in $W_{\eta }$ satisfies $h^0(C,A)=1$.

\vskip 3mm
To finish the proof of the proposition we observe that the second projection $W\ra D^{(\frac r2-2)}$ is an \'etale covering of degree $2^{2g}$, hence for a 
generic element $ (A,\sum_i p_i)$ in $W$ we can simultaneously assume  that $\sum_i p_i$ is $\pi$-simple (i.e., it does not contain fibres of $\pi$) and $h^0(C,A)=1$. 
Let us consider again the exact sequence (\ref{exact_seq_Mumford})
\[
  0\,\lra A \lra \pi_*(\pi^*(A)(\sum_i p_i)) \lra A(\sum_i \bar {p_i})\otimes \eta^{-1} \lra \, 0.
\]
Since $A^{\otimes 2}(\sum_i \bar p_i)\cong \omega _C\otimes \eta $ the exact sequence becomes: 
\[
  0\,\lra A \lra \pi_*(\pi^*(A)(\sum_i p_i)) \lra \omega_C \otimes A^{-1} \lra \, 0.
\]
Notice that $H^0(C,\omega_C \otimes A^{-1}) \cong H^1(C,A)$. By Riemann-Roch Theorem this  group is trivial since $\deg(A)=g$ and $h^0(C,A)=1$. Therefore:
\[
 H^0(C,A) \cong H^0(C, \pi_*(\pi^*(A)(\sum_i p_i)) ).
\]
In other words, there is only one divisor in the linear series $| \pi^*(A)(\sum_i p_i) |$.
This immediately implies the generic injectivity of the map $W\ra B_0$. Since it is obviously surjective we are done.

\vskip 3mm
Now we consider the case $g=1$ and $r\ge 6$. We simply remark that the only place in the proof where we use  that $g\ge 2$ is in Lemma (\ref{base-locus}), 
where we use rank $2$ vector bundles on curves of genus at least $2$ to show the inclusion $\widetilde {Bs} \subset B_0$. Let us prove it directly:
 assume by  contradiction that $L \in \widetilde {Bs}$ does not belong to $B_0$, which in this case means that $L$ (which satisfies $h^0(C,L)>0$ by 
 definition) is  represented by an effective $\pi$-simple divisor. Then, the exact sequence (\ref{exact_seq_Mumford}) reads
\[
 0\lra \mathcal O_C \lra \pi_*(L) \lra \mathcal O_C \lra 0,
\]
hence for any $\alpha \in JC\setminus \{0\}$ we obtain 
\[
0=h^0(C,\pi_*(L)\otimes \alpha )=h^0(C,\pi_*(L\otimes \pi^*\alpha) )=h^0(D,L\otimes \pi^*(\alpha)), 
\] 
which is a contradiction since $L \in  \widetilde {Bs}$.
\end{proof}

\vskip 3mm
Now we are in disposition of finishing the proof of the main Theorem of this section. According to the last Proposition the quotient of the action of 
$JC_2=\Ker (\lambda _{\Xi})$ on the base locus $Bs$ of the linear system $|\Xi |$ is birational to $W/JC_2 = D^{(\frac r2 -2)}$. Since 
$g(D)-1 =2g-2+\frac r2 >\frac r2 -2$ we can apply the main Theorem in \cite{mar} which gives a constructive method to recover the curve 
$D$ from a variety birational  to $D^{(\frac r2 -2)}$. Moreover the base locus $Bs$ is symmetric in $P$ and this symmetry corresponds to 
the involution $\sigma ^{(\frac r2-2)}$ on the symmetric  product of the curve. Applying again the result on \cite{mar} we recover also the involution 
$\sigma $ on $D$ and therefore the whole covering $D\ra D/\langle \sigma \rangle$.

\begin{rem} \label{key-obs}
The case of  Corollary \ref{corolario-Torelli} ($r=6$ and $g=2$) is particularly simple. It is not hard to see that in this case, for a generic $\eta $, 
$W$  and $B_0\cong Bs$ are  isomorphic irreducible curves of genus $81$. In this case the condition on $\eta $ can be precised: we need 
the vanishing $h^0(C, \eta \otimes \omega_C^{-1})=0$. 
The quotient of the action of $\Ker (\lambda_{\Xi})$ on $W$  gives directly the curve $D$ and the symmetry on $W$ induces the involution 
$\sigma $ on $D$. Finally $D/\langle \sigma \rangle =C$.
\end{rem}

\section{Case $g=5$ and $r=2$}

In this section we take care of the map $\cP_{5,2}:\cR_{5,2} \ra \cA_5$ which by Corollary (2.3) in \cite{mp} 
is generically finite. This is the last 
open case of the generic Torelli problem stated in the introduction. We will prove at the end of this section that 
$\mathcal P_{5,2}$ is generically injective.

In order to study the degree of $\cP_{5,2}$ we use the classical Prym map $\cP_6:\mathcal R_6 \ra \mathcal A_5$ defined on irreducible unramified 
double coverings of curves of genus $6$. To do so we first recall how this map was extended by Beauville in \cite{be_invent} to a proper map:
\[
 \overline {\cP_6}:\overline {\mathcal R}_6 \lra \mathcal A_5. 
\]
We give the basic definitions for any $g$. To start with, we borrow from \cite{be_invent} the description of the elements belonging to 
$\overline {\mathcal R}_g$, called admissible coverings (see condition (**) and Lemma (5.1) in loc. cit.). 
\begin{defn} Let $\tilde C$ be a connected curve with only ordinary double points and arithmetic genus $p_a(\tilde C)=2g-1$, and let $\sigma $ be an involution 
on $\tilde C$. Then $\tilde C \lra \tilde C/\langle \sigma \rangle$ is an admissible covering if and only if the following conditions are fulfilled:
 \begin{enumerate}
  \item There are not fixed non-singular points of $\sigma $.
  \item At the nodes fixed by $\sigma $ the two branches are not exchanged.
  \item The number of nodes exchanged under $\sigma $ equals the number of irreducible components of $\tilde C$ exchanged under $\sigma$.
 \end{enumerate}
\end{defn}
In particular $\sigma$ is not the identity on any component.
Under these conditions the arithmetical genus of $\tilde C/\langle \sigma \rangle$ is $g$ and the Prym variety attached to the covering can be defined in a similar 
way of the standard Prym construction and it is a ppav.

An instance of admissible covering is the following: consider two copies of a smooth curve $C$ of genus $g-1$ each one with two marked points. 
Call $p_1, q_1$ the marked points in the first copy and $p_2, q_2$ the same points in the second copy. Then  the curve
\[
 \tilde C = C \cup C / (p_1\sim q_2,p_2\sim q_1)
\]
has a natural involution $\sigma$ exchanging the components. Observe that $\tilde C/\langle \sigma \rangle$ is the irreducible nodal curve 
$C/ p_1 \sim p_2$. This is an admissible covering whose associated Prym variety is the Jacobian of $C$. These elements
$(\tilde C, \sigma )$ are called Wirtinger coverings. Observe that the closure of the set parametrizing these objects describe a divisor in 
$\overline {\mathcal R}_g$ which is birational to the moduli space $\mathcal M_{g-1,2}$ of curves of genus $g-1$ with two marked points. 
Hence this divisor is irreducible. We denote it by $\Delta^{W}$.

Now we come back to our particular situation, hence $g=6$.
Let $\pi:D\ra C$ be a covering  in $\mathcal R_{5,2}$. By glueing in $C$ the two branch points and in $D$ the two ramification points we get an 
admissible covering. These curves form a  divisor in the boundary of $\overline {\mathcal R}_{6}$, birational to $\mathcal R_{5,2}$ (hence irreducible) 
that we denote by $\Delta^n$. A generic element in this divisor is an irreducible admissible covering of a curve with exactly one node. 
Moreover, the composition 
\[
 \mathcal R_{5,2} \hookrightarrow \Delta ^n \hookrightarrow \overline {\mathcal R}_{6}  {\overset{\overline {\cP_6}} \longrightarrow } \mathcal A_5
\]
is the Prym map $\cP_{5,2}$ we are considering.

\begin{rem}
Let us consider the forgetful map $\pi :\mathcal R_6 \ra \cM_6$ sending $[\tilde C\ra C]$ to the class of the curve $C$. Let $\overline {\cM }_6$ the 
Deligne-Mumford compactification of the moduli space of curves. Denote by $\overline {\cR}_6'$ the normalization of  $\overline {\cM}_6$ in the 
function field of $\cR_6$. Then there is a map $\pi': \overline{\cR}_6'\ra \overline {\cM}_6$ which has been studied in \cite{fl}. The preimage of the 
divisor of nodal curves $\Delta_0 \subset \overline {\cM}_6$ under $\pi'$ is 
\[
 \pi'^*(\Delta _0)=\Delta^{na} + \Delta ^W +2 \Delta ^n,
\]
where  the general element $\tilde C \ra C$ of $\Delta^{na}$ is not admissible (in  Beauville's sense) since $\tilde C$ is irreducible with two nodes
interchanged by $\sigma $.  From this one sees that $\pi'$ is ramified along $\Delta^n$ (which explains the notation $\Delta ^{ram}$ in \cite{fl}).
Of course, there are other components of codimension $1$ in the boundary  of $\overline {\cR}_6'$ (and of $\overline{\cR}_6$) that are in the 
preimage of $\Delta_{i,6-i}\subset \overline{\cM}_6$.  We refer to \cite{fl} and the references thererin for more details on this map for any $g$.
\end{rem}

In order to study the degree of $\cP_{5,2}$ our strategy is to look at the preimage of the locus $\mathcal C$, the set of the smooth cubic
 threefolds embedded in $\cA_5$ via the Intermediate Jacobian map (see \cite{cg}). We start by recalling some basic facts on the geometry of cubic 
 threefolds and the representation of its Intermediate Jacobian as Prym variety. All the results are very classical and can be found in \cite{cg}, \cite{ds}, 
 \cite{do} and \cite{murre}.

Let $V$ be a smooth cubic threefold in $\mathbb P^4$. The Intermediate Jacobian $JV$  of $V$  is isomorphic to the Prym variety of many 
elements in $\overline {\cR}_6$. 
This is done as follows: given a line $l\subset V$ and a $2$-plane $\Pi \subset \mathbb P^4$ disjoint with $l$, the projection $V\setminus {l}\ra \Pi$ 
extends to a morphism on the blowing-up $ {V_l}$ of $V$ on $l$. The fibres of this map are conics and the discriminant curve on $\Pi$ parametrizing
 the points where these 
conics degenerate is a quintic $Q_l$. The pairs of lines on the degenerate fibres give a curve $\tilde Q_l$ in the Grassmannian of lines in $\mathbb P^4$.
 Then $\tilde Q_l \ra Q_l$ is an admissible covering and its Prym variety is isomorphic as ppav to $JV$. Hence  $\overline {\cP_6}^{-1}(V)$ contains the 
 set of all these coverings of plane quintics, which is in bijection with the Fano surface $F(V)$ of the lines contained in $V$. In \cite[Part V, \S 1]{ds} it is 
 proved that in fact the fibre is exactly $F(V)$ for any $V$. Since we are interested in the fibre restricted to the divisor $\Delta^n$ we need to know in 
which cases the quintic $Q_l$ is not smooth, in other words, the first step is to compute
\[
 F(V) \cap \Delta^n
\]
for any cubic threefold $V$.

This is related to the geometry of the cubic threefolds due to the following result of Beauville:
\begin{prop}(\cite[Proposition 1.2]{be_pryms_et_ji})\label{beauville_conic_bundles}
 Let $V_l \ra \Pi$ be a conic bundle as above and let $Q_l$ be the discriminant curve. Then
 \begin{enumerate}
  \item The curve $Q_l$ has at most ordinary double points as singularities. 
  \item If $s$ is a regular point in $Q_l$, then the corresponding conic has only one singular point, i.e. is formed by two different lines.
  \item If $s$ is a node of $Q_l$, then the corresponding conic is a double line.
 \end{enumerate}
\end{prop}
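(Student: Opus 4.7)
My plan is a direct coordinate computation combined with an analysis of the discriminant quintic. First choose homogeneous coordinates $[x_0{:}\dots{:}x_4]$ on $\mathbb{P}^4$ so that $l=\{x_2=x_3=x_4=0\}$ and $\Pi=\{x_0=x_1=0\}$. Since $l\subset V$, the defining cubic $F$ lies in the ideal $(x_2,x_3,x_4)$ and decomposes uniquely as $F=F_1+F_2+F_3$, where $F_k$ is homogeneous of degree $k$ in $(x_2,x_3,x_4)$ and of degree $3-k$ in $(x_0,x_1)$. For $s=(s_2{:}s_3{:}s_4)\in\Pi$, parametrize $H_s$ by $(t_0{:}t_1{:}u)\mapsto[t_0{:}t_1{:}us_2{:}us_3{:}us_4]$; then $F|_{H_s}=u\cdot C_s(t_0,t_1,u)$. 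Viewing $C_s$ as a ternary quadratic form, its symmetric matrix $M(s)$ has entries of degrees $1,2,3$ in $s$ (coming from $F_1,F_2,F_3$ respectively), so $\det M(s)$ is a quintic form and $Q_l=\{\det M=0\}$ is a plane quintic.

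Next I stratify by the corank of $M$. At a smooth point of $Q_l$ the form $\det M$ vanishes simply, so $\operatorname{corank} M(s)=1$ and $C_s$ factors as a pair of distinct lines; this proves~(2). At a singular point the corank is at least $2$, but corank $3$ (i.e.\ $M(s)=0$) is excluded: it would mean $H_s\subset V$, which is impossible for a smooth cubic threefold. Indeed, writing a hypothetical $2$-plane $\Pi'\subset V$ as $\{x_3=x_4=0\}$ forces $F=x_3A+x_4B$, and the singular locus of $V$ meets $\Pi'$ exactly where $A|_{\Pi'}=B|_{\Pi'}=0$, which is non-empty by B\'ezout applied to two conics in $\mathbb{P}^2$. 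Hence corank equals $2$, $C_s=2L$ is a double line, and $L\subset V$ meets $l$ in one point. This is the conic-theoretic half of~(3).

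The main content is~(1) together with the converse in~(3), for which I carry out a local Hessian computation. Given a singular point $s_0$ of $Q_l$ with $C_{s_0}=2L$ and $p=L\cap l$, I normalize further so that $p=[1{:}0{:}0{:}0{:}0]$, $L=\{x_1=x_2=x_3=0\}$, hence $s_0=[0{:}0{:}0{:}0{:}1]$. Imposing $L\subset V$ on top of $l\subset V$ kills a further list of monomials of $F$, and $M(s)$ near $s_0$ can then be written down explicitly. Expanding $\det M$ to second order in local affine coordinates $(s_2,s_3)$ on $\Pi$ around $s_0$, the leading term is a binary quadratic form in $(s_2,s_3)$ whose discriminant is a polynomial in certain coefficients of $F$ measuring partial derivatives of $F$ at points of $L$. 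Smoothness of $V$ at the generic point of $L$ forces this discriminant to be non-zero, so $s_0$ is an ordinary double point of $Q_l$. In particular every singular point of $Q_l$ is a node, and the nodes are exactly the $s$ for which $C_s$ is a double line, completing~(1) and~(3).

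The principal technical obstacle is the Hessian computation above: one must carefully track the monomials of $F$ that survive after imposing $l\cup L\subset V$, compute $M(s)$ in a local chart on $\Pi$ around $s_0$, and identify the discriminant of its second-order expansion with a non-trivial expression in the remaining coefficients of $F$. The non-vanishing of that expression must then be translated into the smoothness of $V$ along $L$. The calculation is elementary but requires patience; it appears in essentially this form in Beauville's original proof and is implicit in the Clemens--Griffiths analysis of the conic-bundle structure on the blow-up $V_l\to\Pi$.
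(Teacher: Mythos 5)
The paper does not prove this proposition at all: it is imported verbatim from Beauville (\cite[Proposition 1.2]{be_pryms_et_ji}), so there is no internal argument to compare against. Your setup (the decomposition $F=F_1+F_2+F_3$, the symmetric matrix $M(s)$ with entries of degrees $1,2,3$, the quintic $\det M$, the proof of (2), and the exclusion of corank $3$ via the non-existence of planes in a smooth cubic threefold) is correct and is indeed the standard Beauville/Clemens--Griffiths route. But as a proof of (1) and (3) the proposal has a genuine gap.

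The decisive gap is the sentence ``at a singular point the corank is at least $2$''. What you actually establish is the reverse implication: corank $\ge 2$ forces $d(\det M)=0$ because the adjugate vanishes. A priori $\det M$ could vanish to order $2$ at a point $s_0$ where $M(s_0)$ still has corank $1$, i.e.\ where $C_{s_0}$ is a pair of distinct lines; nothing in your stratification excludes this, and for an arbitrary net of conics such ``tangency'' singularities of the discriminant do occur. Ruling them out is precisely where the smoothness of $V$ (more accurately, of the total space $V_l=\mathrm{Bl}_l V$ of the conic bundle) must be used a second time. Concretely, if $\operatorname{corank} M(s_0)=1$ with $v$ spanning $\ker M(s_0)$ and $q$ the vertex of $C_{s_0}$, then $\partial_j\det M(s_0)=\pm\, v^{T}(\partial_j M)(s_0)\,v$, and for $q\notin l$ these numbers are, up to units, the components of $\nabla F(q)$ transverse to $H_{s_0}$; the components along the cone over $H_{s_0}$ already vanish because $q$ is a singular point of the plane cubic $H_{s_0}\cap V$, so $\nabla(\det M)(s_0)=0$ would force $\nabla F(q)=0$, contradicting smoothness of $V$. (Equivalently: diagonalize $M$ locally as $\mathrm{diag}(1,-1,g)$ near $s_0$, write the bundle near the vertex as $y_1^2-y_2^2+g(s)=0$, and observe that smoothness of the total space there is exactly $dg(s_0)\neq 0$; the case $q\in l$ requires working on the blow-up.) Without this step, a singular point of $Q_l$ has not been shown to carry a double line, so both (1) and (3) are unproved.

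A secondary weakness: the Hessian computation that is the real content of (1) --- that a corank-$2$ point is an \emph{ordinary} double point --- is only described and then explicitly deferred to ``Beauville's original proof.'' Since the proposition is itself a citation of that proof, outsourcing the central calculation to the same source reduces your argument to the citation the paper already makes. Either carry out the second-order expansion of $\det M$ at $s_0$ and show that the discriminant of the resulting binary quadratic form is nonzero because $V$ is smooth along $L$, or present that step honestly as a reference rather than as a proof.
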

Hence $Q_l$ is singular if and only if there is a plane in $\mathbb P^4$ intersecting $V$ in $l+2r$, where $r\in F(V)$. So we are interesting in the following two sets of lines:
\[
\begin{aligned}
 &\Gamma =\{l\in F(V) \mid \exists \text{ a plane } L \text { and a line } r\in F(V) \text{ with } V\cdot L=l+2r\}, \\
 &\Gamma '=\{r\in F(V) \mid \exists \text{ a plane } L \text { and a line } l\in F(V) \text{ with } V\cdot L=l+2r\}.
 \end{aligned}
 \]
According to Proposition \ref{beauville_conic_bundles}, the set $\Gamma$ parametrizes the lines $l$ for which $Q_l$ is singular.

The curve $\Gamma'$ has deserved more attention in the literature. It appears in \cite[section 6]{cg} as the set of lines of ``second type''. 
In Proposition 10.21 of that paper it is shown that $\Gamma '$ has pure dimension $1$ and, as divisor on the Fano surface, it is linearly equivalent to 
twice the canonical divisor. Moreover, Murre proved that $\Gamma '$ ($\mathcal F_0$ in Murre's notation) is smooth, non-necessarily connected 
and $\Gamma$ ($\mathcal F_0'$ with his notation) is Zariski-closed of dimension at most $1$ (see \cite[Corollary (1.9), Lemma (1.11)]{murre}). 

It is easy to go further with the techniques of \cite{murre} to show that in fact $\Gamma $ is also a curve. 
To this end, let us consider the incidence variety 
\[
I=\{(l,r) \in \Gamma \times \Gamma' \mid \text {there is a plane } L \text{ with }  L\cdot V=l+2r\}.
\]
By Proposition \ref{beauville_conic_bundles}  the discriminant curve $Q_l$ has at most a finite number of singularities, hence we have that 
$I\ra \Gamma $ is finite-to-one. The following lemma implies that  $I\ra \Gamma '$ is bijective.
\begin{lemma}
Given a line $r\in \Gamma '$ there is only one line $l$ such that there is a plane $L$ with $L\cdot V=l+2r$.
\end{lemma}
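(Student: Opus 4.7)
The plan is to prove uniqueness of the line $l$ by first establishing uniqueness of the plane $L$ with $V\cdot L = 2r + l$; the line $l$ is then the residual component of $V\cap L$ after removing $2r$, and hence determined by $L$. The key reformulation is that the equality of divisors $V\cdot L = 2r + l$ on $L$ is equivalent to $L$ being tangent to $V$ all along $r$, i.e.\ $L \subset T_p V$ for every $p\in r$. Indeed, if $F$ is the defining cubic of $V$, then $F|_L$ must factor as $\ell_r^2\, \ell_l$ precisely when the gradient of $F$ vanishes tangentially to $L$ at every point of $r$, which is the infinitesimal form of the inclusion $L\subset T_pV$.

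With this in hand, I would prove uniqueness of $L$ by contradiction. Suppose $L\neq L'$ are two planes through $r$ with $L, L' \subset T_pV$ for every $p\in r$. Since $L\cap L'\supseteq r$ and any larger intersection would force $L = L'$, we have $L\cap L' = r$ exactly, so $H := \langle L, L'\rangle$ is a hyperplane in $\PP^4$. From $L, L'\subset T_pV$, both hyperplanes, we deduce $T_pV = H$ for every $p\in r$; in other words, the Gauss map $\gamma\colon r\to (\PP^4)^{\vee}$, $p\mapsto T_pV$, is constant along $r$.

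The central step is then to rule out this constancy using the smoothness of $V$. Choose coordinates so that $r = \{x_0 = x_1 = x_2 = 0\}$ and $H = \{x_0 = 0\}$, and write $F = x_0 A + x_1 B + x_2 C$ for quadrics $A, B, C$ (possible because $r\subset V$). A direct computation gives $\nabla F(p) = (A(p), B(p), C(p), 0, 0)$ for $p\in r$, so the condition $T_pV = H$ translates to $B|_r \equiv C|_r \equiv 0$ together with $A|_r$ nowhere vanishing on $r\cong \PP^1$. But $A|_r$ is a section of $\cO_{\PP^1}(2)$, which must have zeros; at any such zero all partials of $F$ vanish simultaneously, contradicting the smoothness of $V$.

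The main obstacle is really this last step, but after coordinates are fixed it reduces to the elementary fact that a nonzero section of $\cO_{\PP^1}(2)$ must vanish somewhere. As a more conceptual alternative, one may invoke the classical dichotomy on the normal bundle $N_{r/V}$ of a line in a smooth cubic threefold: for $r$ of second type, the image of $\gamma|_r$ in $(\PP^4)^{\vee}$ is a line, whose base locus in $\PP^4$ is a unique plane $L\supset r$, from which uniqueness of $l$ follows immediately.
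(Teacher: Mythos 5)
Your proof is correct, and its organization is genuinely different from the paper's even though both ultimately run on the same engine. The paper fixes coordinates adapted to $r$, $l$ and $L$ at once, writes $F = yz^2 + uG_2 + vH_2$, parametrizes the planes through $r$ by points $(0:0:\alpha:\beta:\gamma)$, and shows that the multiplicity-two condition is the identity $\beta A + \gamma B \equiv 0$ between the binary quadrics $A = G_2|_r$, $B = H_2|_r$; a second solution would make $A$ and $B$ proportional, and their common zeros on $r$ would be singular points of $V$. You instead reformulate $V\cdot L \ge 2r$ as $L \subset T_pV$ for all $p\in r$ --- a correct equivalence, since writing $F|_L = \ell_r G$ gives $d(F|_L)_p = G(p)\,d\ell_r$ for $p\in r$, so the differential vanishes along $r$ iff $\ell_r \mid G$ --- and deduce from two distinct tangent planes that the Gauss map is constant along $r$; smoothness then fails at a zero of the binary quadric $A|_r$, a nontrivial section of $\cO_{\PP^1}(2)$ (and the case $A|_r\equiv 0$ is even worse). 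The final mechanism is identical in both proofs (a binary quadric on $r\cong \PP^1$ must vanish, and at such a point all partials of $F$ vanish), but your reduction via constancy of the Gauss map is more intrinsic, proves the slightly stronger uniqueness of the tangent plane $L$ itself, and connects the lemma to the Clemens--Griffiths dichotomy of lines of the first and second type, which your closing remark correctly identifies as an alternative citation; the paper's version is a direct elimination in coordinates that never mentions tangent hyperplanes. Both arguments are complete.
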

\begin{proof}
We fix $r$, $l$ and $L$ as in the statement. We assume $r\neq l$, a similar proof works with $r=l$.
We can choose a reference system in such a way that $(1:0:0:0:0), (0:1:0:0:0)\in r$ and $(1:0:0:0:0), (0:0:1:0:0)\in l$. Hence, using coordinates 
$x,y,z,u,v$, the  equations of $r$ and  $l$ are  $z=u=v=0$ and $y=u=v=0$ respectively. Moreover $L$ has equations $u=v=0$. 
Our hypothesis implies that the cubic polynomial $F$ defining $V$ is of the form:
\[
F = c\, y z^2 \, + u \, G_2 \, + \, v \,H_2,
\]
where $G_2,H_2$ are the equations of two quadrics $Q_1,Q_2$ in $\mathbb P^4$. Observe that if $c=0$ then, by an easy computation, the points satisfying
$G_2=H_2=u=v=0$ are singularities of $V$, which is a contradiction. So we can assume $c=1$.

By convenience, in the later computation we decompose the quadratic forms $G_2,H_2$ separating the part in the variables $x,y$:
\[
\begin{aligned}
&G_2\,=\, A(x,y)\,+\,z\,L_1+\,u \,L_2 + \,v\, L_3 \\
&H_2\,=\, B(x,y)\,+\,z\,M_1+\,u \,M_2 + \,v\, M_3,
\end{aligned} 
\]
where $L_i,M_i$ are linear forms.
Hence we have
\begin{equation}\label{cubic_equation}
\begin{aligned}
F\,=\, &  y z^2 \, + u \, (A(x,y)\,+\,z\,L_1+\,u \,L_2 + \,v\, L_3)     \, + \, \\ & v \,(   B(x,y)\,+\,z\,M_1+\,u \,M_2 + \,v\, M_3).
\end{aligned}  
\end{equation}
  
The planes through $r$ are parametrized by the points of the plane $x=y=0$. We want to know for which points $p=(0:0:\alpha:\beta:\gamma)$ 
the plane $r\vee p$ intersects $V$ in $2r+l'$ for some line $l'$.  
A generic point of $r\vee p$ is of the form $(r:s:t \alpha : t \beta : t\gamma )$ for some parameters $r,s,t$. Replacing in (\ref{cubic_equation}) 
we have to impose that $t$ (the equation of $r$ in the plane) appears with multiplicity $2$:
\[
\alpha ^2st^2+ t \beta (A(r,s)+t(\alpha L_1+\dots))
+            t\gamma (B(r,s)+t(\alpha M_1+\dots )).         
\]
So the condition is $\beta A(r,s)+ \gamma B(r,s)=0$ for any $r,s$. The next claim implies that $\beta=\gamma =0$ and therefore the only solution 
corresponds to the plane $L$ and hence $l$ is unique.

{\bf Claim:} If the quadratic forms $A(x,y), B(x,y)$ are not linearly independent, then $V$ is singular.

Indeed, assume that $B(x,y)=\lambda A(x,y)$ for some constant $\lambda$. Then $F$ is of the form:
\[
y z^2 \, + u \, (A(x,y)\,+\,z\,L_1+\,u \,L_2 + \,v\, L_3)     \, + \, v \,(  \lambda A(x,y)\,+\,z\,M_1+\,u \,M_2 + \,v\, M_3).
\]

One easily checks that the points which satisfy $A(x,y)=z=u=v=0$ are non-trivial solutions of the system of the partial derivatives of $F$, hence $V$ 
is singular.
\end{proof}

\begin{corollary}
 For any smooth cubic threefold, $\Gamma $ is a curve dominated by $\Gamma'$.
\end{corollary}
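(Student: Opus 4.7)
The plan is to combine the pieces already assembled in the excerpt with the lemma just proved. From the cited results of Murre, $\Gamma'$ is a smooth curve (pure dimension one, equivalent to $2K_{F(V)}$) and $\Gamma$ is Zariski closed of dimension at most one, so what remains is to exhibit a finite surjective map $\Gamma' \to \Gamma$. The natural candidate is the composition of the two projections from the incidence variety $I$.

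First I would check that the projection $I \to \Gamma$ sending $(l,r) \mapsto l$ is surjective and has finite fibers. Surjectivity is tautological from the definition of $\Gamma$: any $l \in \Gamma$ admits by assumption a plane $L$ and a line $r$ with $V\cdot L = l+2r$, and this $r$ lies in $\Gamma'$, so $(l,r) \in I$. Finiteness comes from Proposition \ref{beauville_conic_bundles}: a pair $(l,r)$ with $V\cdot L = l+2r$ corresponds to a node of the discriminant quintic $Q_l$, and $Q_l$, being a plane curve with only ordinary double points, has at most finitely many singularities. Thus $I \to \Gamma$ is finite and surjective.

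Next, the projection $I \to \Gamma'$ sending $(l,r) \mapsto r$ is surjective by the definition of $\Gamma'$, and by the preceding lemma it is also injective: for each $r \in \Gamma'$ the line $l$ is uniquely determined. Hence $I \to \Gamma'$ is a bijection on closed points, so $I$ is an irreducible (or at least equidimensional) curve of the same dimension as $\Gamma'$, namely one. Composing the inverse of this bijection with the projection $I \to \Gamma$ produces a finite surjective morphism $\Gamma' \twoheadrightarrow \Gamma$. In particular $\dim \Gamma \geq 1$, and combined with Murre's bound $\dim \Gamma \leq 1$ we conclude that $\Gamma$ is a curve dominated by $\Gamma'$.

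No step looks genuinely hard here — the content sits in the lemma (uniqueness of $l$ given $r$) and in Beauville's description of the singular fibers; the corollary is a formal consequence. The only mild subtlety is making sure that $I$ inherits enough structure (via the bijection to the smooth curve $\Gamma'$) to transfer dimension to $\Gamma$, but since we only need dimension of the image and finiteness of fibers, this is immediate.
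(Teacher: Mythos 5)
Your argument is correct and is essentially the paper's own: the paper sets up the same incidence variety $I$, observes that $I\to\Gamma$ is finite-to-one (via Beauville's proposition) and that the lemma makes $I\to\Gamma'$ bijective, and deduces the corollary exactly as you do. The one point you flag — transferring the curve structure through the bijection $I\to\Gamma'$ — is indeed harmless here (in characteristic zero a bijective morphism onto the smooth curve $\Gamma'$ is an isomorphism), so nothing is missing.
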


In the case of general cubic threefold we can go further in the description of the properties of the curves $\Gamma $ and $\Gamma '$ 

\begin{prop}
 For a general cubic threefold $V$ the curves $\Gamma $ and $\Gamma '$ are irreducible  and $\Gamma '$ is the normalization of $\Gamma$.
\end{prop}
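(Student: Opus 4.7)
The plan is to exploit the incidence correspondence
\[
\Gamma \xleftarrow{\,p\,} I \xrightarrow{\,p'\,} \Gamma'
\]
introduced just before the previous lemma, and reduce all three claims (smoothness, irreducibility, and the isomorphism $\Gamma\cong \Gamma'$) to the irreducibility of $\Gamma'$ for a general $V$. By the lemma just proved, $p'$ is bijective, and since \cite{murre} gives that $\Gamma'$ is smooth, $p'$ is a finite bijective morphism between smooth curves, hence an isomorphism. In particular $I$ is smooth.

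For the other projection $p:I\to \Gamma$, the fibre over $l\in \Gamma$ is in bijection with the set of nodes of the discriminant quintic $Q_l$ (by Proposition \ref{beauville_conic_bundles}). I would first check that for a general smooth cubic $V$, a generic point of $\Gamma$ corresponds to a quintic with exactly one node. This is an open condition on the irreducible moduli of smooth cubic threefolds, so it reduces to exhibiting a single $V$ where some $l\in \Gamma$ has $Q_l$ with a single node, which is straightforward by direct computation. Granting this, $p$ is finite and birational. To upgrade $p$ to an isomorphism I would prove that $\Gamma$ is smooth by a local deformation argument: at an $l$ where $Q_l$ has a unique node, a first-order deformation of $l$ inside $F(V)$ lies in $\Gamma$ precisely when that node persists to first order, which is a single regular condition on the two-dimensional tangent space $T_l F(V)$ (via the Kodaira--Spencer class of the family of discriminant quintics). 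Thus $\Gamma$ is smooth at every such $l$, hence normal at the generic point of each component, and the finite birational morphism $p$ from the smooth curve $I$ to $\Gamma$ is then an isomorphism.

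The remaining ingredient is the irreducibility of $\Gamma'$ for general $V$. By the previous steps, $\Gamma'_V$ fits into a proper family of smooth curves over the irreducible moduli space of smooth cubic threefolds; openness of the locus of irreducible fibres in such a family reduces us to exhibiting a single smooth cubic $V_0$ with $\Gamma'_{V_0}$ irreducible. I would establish this either by a direct calculation on a cubic with large automorphism group (using $\Aut(V_0)$ to force a single orbit on the set of irreducible components of $\Gamma'_{V_0}$), or by a monodromy argument along a one-parameter family of cubics starting from a mildly singular cubic where $\Gamma'$ can be analyzed explicitly.

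\textbf{Main obstacle.} The smoothness of $\Gamma'$ (Murre) and its numerical class $\sim 2K_{F(V)}$ (\cite{cg}) are classical, and the incidence argument reducing $\Gamma$ to $\Gamma'$ is essentially formal given the preceding lemma. The genuinely delicate point is the irreducibility of $\Gamma'$ for some (and hence general) cubic threefold; this requires either an explicit calculation on a well-chosen $V_0$ or a monodromy computation, and I expect this to be the most substantial step of the proof.
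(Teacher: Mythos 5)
There are two genuine gaps. First, the irreducibility of $\Gamma'$ — which you yourself single out as the substantial step — is not actually proved: you reduce it (correctly, since the fibres $\Gamma'_V$ form a family of smooth curves over the irreducible parameter space of smooth cubics, so the number of connected components is constant) to exhibiting one cubic with $\Gamma'$ irreducible, but then only list two possible strategies without carrying either out. The paper closes this step with a short intersection-theoretic argument that your plan misses entirely: for general $V$ the Fano surface $F(V)$ has Picard number $1$ (\cite{rou}), the positive generator of the N\'eron--Severi group is ample because $K_{F(V)}$ gives the Pl\"ucker embedding, and $\Gamma'$ is bicanonical (\cite{cg}); hence any decomposition $\Gamma'=T_1\cup T_2$ would force $T_1\cdot T_2>0$, contradicting the smoothness of $\Gamma'$ proved by Murre. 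No explicit example or monodromy computation is needed.

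Second, your treatment of the smoothness of $\Gamma$ and of the isomorphism $I\cong\Gamma$ does not go through as written. You only arrange that the \emph{generic} $l\in\Gamma$ has a uninodal discriminant quintic, prove smoothness of $\Gamma$ at such points by an unverified ``single regular condition'' (you never check that this linear condition on $T_lF(V)$ is nonzero), and then conclude that the finite birational map $p\colon I\to\Gamma$ is an isomorphism because $\Gamma$ is ``normal at the generic point of each component.'' That last inference is false: a reduced curve is automatically smooth at its generic point, and a finite birational morphism onto a curve is an isomorphism only if the target is normal \emph{everywhere}. If some $l_0\in\Gamma$ had a two-nodal $Q_{l_0}$, then $p^{-1}(l_0)$ would consist of two points and $\Gamma$ would acquire a singularity at $l_0$, so both smoothness and $\Gamma\cong I\cong\Gamma'$ would fail there. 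What is actually needed — and what the paper proves — is that for a \emph{general} $V$ no line $l$ at all has $Q_l$ with more than one node; this is done by a parameter count showing that cubics admitting a plane section of the form $l+2r$ and another of the form $l+2r'$ with $r\neq r'$ (normalizing four points, the equation takes the shape $\lambda y^2z+\mu ux^2+vQ_2$) form a codimension-$2$ locus in the space of cubic threefolds. Your argument needs to be replaced or supplemented by such a global exclusion of multi-nodal discriminants.
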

\begin{proof}
We first prove the irreducibility. This is a consequence of the fact that for a general $V$ the Fano surface $F(V)$ has Picard number $1$ (see \cite[p. 382]{rou}).
Indeed, recall from \cite[section 10]{cg} that $\Gamma '$ is bicanonical in $F(V)$ and that canonical divisor is very ample giving the Pl\"ucker 
embedding $F(V)\hookrightarrow Grass(1,4)\hookrightarrow \mathbb P^9$. Therefore the positive generator of the Neron-Severi group is ample. 
Thus, if $\Gamma'= T_1 \cup T_2$ then $T_1 \cdot T_2>0$ contradicting the smoothness proved by Murre. Since $\Gamma '$ dominates $\Gamma$
both are irreducible.  We only have to prove that the map $\Gamma ' \lra \Gamma$ has degree $1$. 
Following (5.13) in \cite{do}, there is an involution $\lambda $ in the fibre of a generic $JV\in \mathcal C$ in such a way that, given a line $l\in \Gamma$, the preimages $r\in \Gamma'$ corresponds to odd semicanonical pencils in a smooth Prym curve $(C,\eta)=\lambda(\tilde Q_l,Q_l)$ (that is, theta characteristics $L$ with $h^0(C,L)=2$ and $h^0(C,L\otimes \eta)=1$). The locus of curves with a semicanonical pencil is an irreducible divisor and a generic element of this divisor has only one semicanonical pencil, this implies our result. 
% To prove the smoothness of $\Gamma $ we will show that there is only one node in the quintic $Q_l$ for any $l \in \Gamma$, this
% implies that $\Gamma \cong I \cong \Gamma'$. 
%  
% We finally show the statement on the number of nodes; assume that there is a line $l$ in $V$ such that there are two different lines $r, r'$  and two 
% different planes $L, L'$ with $V\cdot L=l+2r$ and  
% $V\cdot L'=l+2r'$. We can choose a reference system such that $(1:0:0:0:0) \in l \cap r$, $(0:1:0:0:0) \in l \cap r'$, $(0:0:1:0:0) \in r$ and 
% $(0:0:0:1:0) \in r'$. We use the variables $x,y,z,u,v$ for the coordinates. Then the equation of $V$ is of the form
% \[ 
%  \lambda y^2 z + \mu ux^2 + v Q_2,
% \]
% where $Q_2$ is the equation of a quadric in $\mathbb P^4$. The set of these equations represents a linear variety of dimension $16$ in the parameter
% space $\mathbb P ^{34}$ of the equations (up to constant) of degree $3$ in $5$ variables. The projectivities that leave invariant the four fixed points
% depend on $8$ projective parameters. Hence the family of those $V$ for which there exist a line $l\in F(V)$ with $Q_l$ with more than one node is a
% codimension $2$ subvariety in $\mathcal C$.     
\end{proof}

\vskip 3mm
We come back to our computation of the degree of the map $\cP_{5,2}$ identified with $\overline {\cP_6}_{|\Delta ^n}$. Observe that, by the previous
results, any $JV\in \mathcal C$ can be represented as $P(\tilde Q_l,Q_l)$ for a quintic plane curve with only one node. Notice that $\tilde Q_l$ must be
irreducible, otherwise $\tilde Q_l \ra Q_l$ would be a Wirtinger cover and $JV$ would be a Jacobian contradicting the main result in \cite{cg}.
 Summarizing, we have that:
\begin {enumerate}
 \item [(a)] The locus $\mathcal C$ is contained in  $\overline {\cP_6}(\Delta ^n) $.
 \item [(b)] The fiber over $V$ of $\overline {\cP_6}_{|\Delta ^n}$ is $\Gamma = \Delta ^n \cap F(V)$.
 \item [(c)] The locus $\mathcal S' = (\overline {\cP_6}_{|\Delta ^n})^{-1}(\mathcal C)$ has dimension $11$, all the fibers of $\mathcal S' \ra \mathcal C$ 
 are $1$ dimensional and the generic fibre is irreducible.  
\end {enumerate}

In particular the irreducible component $\mathcal S\subset \mathcal S'$ containing the generic irreducible fibers is the only component of 
$\mathcal S'$ of dimension $11$. Since the map is closed $S=S'$.

Although the map  $\overline {\cP_6}_{|\Delta ^n}$ is generically finite, the fibres over $\mathcal C$ are one dimensional as we have already seen. 
Nevertheless, the degree of $\overline {\cP_6}_{|\Delta ^n}$ can be computed at this locus by using the concept of local degree explored in 
\cite[Part I, section 3]{ds}. The local degree $d$ of $\overline {\cP_6}_{|\Delta ^n}$ along $\mathcal S$ is the degree of map obtained from 
$\overline {\cP_6}_{|\Delta ^n}$ by localizing at $\mathcal S$ in the source and at $\mathcal C$ in the target:
\[
 (\Delta ^n)_\mathcal S \lra (\mathcal A_5)_\mathcal C.
\]
We have that $\deg(\overline {\cP_6}_{|\Delta ^n})=d$. 

Assume that $\overline {\cP_6}_{|\Delta ^n}$ lifts to a regular map from the blowup $\widetilde \Delta^n$ of $\Delta^n$ along $\mathcal S$ to the blowup 
of $\widetilde \cA_5$ of $\cA_5$ along $\mathcal C$, sending the exceptional divisor $\tilde {\mathcal S}$ to the exceptional divisor $\tilde {\mathcal C}$:
\[
 \widetilde {\overline {\cP_6}}_{|\Delta ^n}:\tilde {\mathcal S} \lra \tilde {\mathcal C}.
\]
Then:
\begin{lemma}\cite[Lemma 3.2]{ds} \label{local_degree}
 If the natural linear map  $N_{\mathcal S|\Delta^n,s}\lra N_{\mathcal C|\cA_5,\overline {\cP_6}(s)}$ induced by 
 \[
 d(\overline {\cP_6}_{|\Delta ^n}):T_{\Delta^n}\lra T_{\cA_5}
  \]
is injective at any $s\in \mathcal S$, then the local degree at $\mathcal S$ equals the degree of $\widetilde {\overline {\cP_6}}_{|\Delta ^n}:\tilde 
{\mathcal S} \lra \tilde {\mathcal C}$. 
\end{lemma}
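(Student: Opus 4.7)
My plan is to reduce Lemma \ref{local_degree} to a local statement near a point $s \in \mathcal{S}$ with image $c = \overline{\cP_6}(s) \in \mathcal{C}$, and then verify it by a formal coordinate computation in the spirit of \cite[Lemma 3.2]{ds}. First I would pick formal coordinates $(x_1, \ldots, x_{11}, y_1, y_2, y_3)$ at $s$ on $\Delta^n$ with $\mathcal{S}$ cut out by $y_1 = y_2 = y_3 = 0$, and coordinates $(x'_1, \ldots, x'_{10}, y'_1, \ldots, y'_5)$ at $c$ on $\cA_5$ with $\mathcal{C}$ cut out by $y'_1 = \cdots = y'_5 = 0$. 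Because $\overline{\cP_6}|_{\Delta^n}$ sends $\mathcal{S}$ into $\mathcal{C}$, the pullback of each $y'_i$ lies in the ideal $(y_1, y_2, y_3)$, so one can expand
\[
y'_i \circ \overline{\cP_6} = \sum_{j=1}^{3} a_{ij}(x)\, y_j + O(|y|^2), \qquad i = 1, \ldots, 5.
\]
The normal map $N_{\mathcal{S}|\Delta^n, s} \to N_{\mathcal{C}|\cA_5, c}$ is represented, after dualizing, by the $5 \times 3$ matrix $(a_{ij}(0))$, and the hypothesis of injectivity says precisely that this matrix has maximal rank $3$.

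Next I would pass to the blowups. On the affine chart of $\widetilde{\Delta^n}$ where $y_1 \neq 0$, coordinates are $(x, y_1, t_2, t_3)$ with $y_j = y_1 t_j$ for $j = 2, 3$, and the exceptional divisor $\widetilde{\mathcal{S}}$ is $\{y_1 = 0\}$. In these coordinates every $y'_i \circ \overline{\cP_6}$ vanishes to order exactly one along $\widetilde{\mathcal{S}}$, with
\[
y'_i \circ \overline{\cP_6} = y_1 \bigl( a_{i1}(x) + a_{i2}(x) t_2 + a_{i3}(x) t_3 + O(y_1) \bigr).
\]
Hence the lift $\widetilde{\overline{\cP_6}}|_{\Delta^n}$ is a regular morphism in this chart and its restriction to $\widetilde{\mathcal{S}}$ is given fiberwise by the projectivized linear map induced by $(a_{ij})$, which the rank condition makes injective at the generic point.

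The key step is then the degree comparison. By definition, the local degree $d$ counts the preimages in $\Delta^n$, near $\mathcal{S}$, of a general point of $\cA_5$ sitting near $\mathcal{C}$. Such preimages lift bijectively to preimages in $\widetilde{\Delta^n}$ near $\widetilde{\mathcal{S}}$ of a point of $\widetilde{\cA_5}$ near $\widetilde{\mathcal{C}}$. Specializing this target point generically into $\widetilde{\mathcal{C}}$, the injectivity of the normal map prevents preimages from either colliding in the exceptional fiber or escaping into points of $\widetilde{\Delta^n} \setminus \widetilde{\mathcal{S}}$, so the count is preserved in the limit and yields $\deg\bigl(\widetilde{\overline{\cP_6}}|_{\Delta^n}\colon \widetilde{\mathcal{S}} \to \widetilde{\mathcal{C}}\bigr)$.

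The main obstacle is precisely this control of limits. Without the injectivity hypothesis the transverse component of the differential could degenerate along $\mathcal{S}$: preimages from outside $\mathcal{S}$ could flow into the exceptional fiber, or distinct preimages in $\widetilde{\mathcal{S}}$ could merge, either of which would break the desired equality. The injectivity assumption rules out both pathologies by forcing $\widetilde{\overline{\cP_6}}|_{\Delta^n}$ to be \'etale in the directions transverse to the exceptional divisors at a generic point of $\widetilde{\mathcal{S}}$, which is exactly what is needed to match the two degrees.
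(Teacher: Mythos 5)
The paper does not actually prove this lemma: it is quoted verbatim from Donagi--Smith \cite[Lemma 3.2]{ds} (Part I, \S 3 of that paper, where the notion of local degree is introduced), and the authors only verify later that its hypotheses hold in their situation. So there is no internal proof to compare against; what you have written is a from-scratch reconstruction, and it is essentially the standard argument, carried out correctly. Your identification of the normal map with the matrix $(a_{ij}(0))$, the computation in the blowup chart showing that injectivity makes the lift regular along the exceptional divisor and transverse to $\tilde{\mathcal C}$ in the $\partial/\partial y_1$ direction, and the resulting conservation-of-number argument are all the right ingredients.

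One point is compressed enough to deserve a warning. In the final degree comparison you need, at a general point $w_0$ of $\tilde{f}(\tilde{\mathcal S})$ with fibre $\{\tilde s_1,\dots,\tilde s_{d'}\}$ in $\tilde{\mathcal S}$, that each $\tilde s_i$ absorbs \emph{exactly} one sheet of the covering as a general $w$ in the image specializes to $w_0$. The injectivity of the normal map only controls the direction transverse to $\tilde{\mathcal C}$; to get local injectivity of the lift at $\tilde s_i$ (hence ``at most one sheet''), you must combine it with the fact that $\tilde f|_{\tilde{\mathcal S}}$ is unramified at the points of a general fibre, which holds generically in characteristic zero but should be said. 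Likewise ``at least one sheet'' uses that a neighbourhood of $\tilde s_i$ dominates the image variety, i.e.\ that $\tilde s_i$ is not in the positive-dimensional-fibre locus of $\tilde f$; this follows from the finiteness of the general fibre of $\tilde{\mathcal S}\to\tilde{\mathcal C}$ but is not automatic from transversality alone. Finally, your phrase about sheets ``escaping into $\widetilde{\Delta^n}\setminus\widetilde{\mathcal S}$'' is slightly misleading: since the exceptional divisor is the full preimage of $\mathcal S$, preimages whose limits lie outside $\tilde{\mathcal S}$ are simply not near $\mathcal S$ downstairs and never enter the count; the genuine danger is only the collision/vanishing of sheets at the $\tilde s_i$, which the above takes care of. With these clarifications your argument is a complete proof of the cited lemma.
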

In other words, under some conditions on the behavior of the normal bundle, we can compute the degree by looking at the degree of the map between
 exceptional divisors. In part V of \cite{ds} the authors proved that the conditions of the lemma are satisfied by the usual Prym map $\overline {\cP_6}: 
 \overline {\cR}_6\ra \cA_5$ with respect to $\mathcal C$. Moreover they give a nice geometrical description of the blowups involved in the picture. 
 In this way they are able to confirm once again that the degree of $P$ is $27$ (see also \cite[\S 4]{do}). The next proposition is a summary of the 
 results in \S1 and \S2 in part V of \cite{ds}: 
 
\begin{proposition} Let $\cF$ be the closure of the union of all the Fano surfaces $F(V)$ of smooth cubic threefolds $V$. Denote by $\pi_1:\widetilde 
{\overline R_6}\ra \overline R_6$ the blowup of $\overline R_6$ along $\cF$ and by $\pi_2: \widetilde {\cA_5}\ra \cA_5$ the blowup of $\cA_5$ along $\mathcal C$. Then:
 \begin{enumerate}
 \item $\overline {\cP_6}^{-1}(\mathcal C)=\cF$.
 \item For a cubic threefold $V$ the fibre $\pi_2^{-1}(V)$ can be identified with the dual of the ambient space $\mathbb P^4$ of $V$.
 \item For an admissible covering $(\widetilde {Q_l}, Q_l)\in \cF$ the fibre $\pi_1^{-1}(\widetilde {Q_l},Q_l)$ can be identified with the dual of
 the ambient space $\mathbb P^2$ of $Q_l$.
 \item The map between exceptional divisors $\tilde {\cF} \ra \tilde {\mathcal C}$ sends $(\tilde Q_l,Q_l,m)$ to $(V,l\vee m)$. In particular is 
 well-defined and injective everywhere and the conditions of the Lemma (\ref{local_degree}) are satisfied.
\end{enumerate}
\end{proposition}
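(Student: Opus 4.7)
The plan is to adapt the four-step framework developed by Donagi and Smith in \cite{ds} for the usual Prym map $\overline{\cP_6}:\overline{\cR}_6\to \cA_5$, and to verify that all ingredients remain valid when one restricts attention to admissible coverings of nodal plane quintics arising from lines on a cubic threefold.

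\emph{Step 1: Identifying $\overline{\cP_6}^{-1}(\mathcal C)$.} The inclusion $\cF \subset \overline{\cP_6}^{-1}(\mathcal C)$ is exactly the conic-bundle construction recalled before Proposition \ref{beauville_conic_bundles}: for any $l\in F(V)$, projection from $l$ produces an admissible covering $\tilde Q_l\to Q_l$ whose Prym is $JV$. The reverse inclusion is the content of \cite[Part V, \S 1]{ds}: one shows that the differential of $\overline{\cP_6}$ at a cover not coming from a line drops rank only on $\cF$, so no extra component of the fibre appears. I would simply cite this and add the remark that $\tilde Q_l$ is irreducible (as otherwise the cover would lie in $\Delta^W$ and $JV$ would be a Jacobian, contradicting Clemens--Griffiths).

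\emph{Step 2: Identifying the two exceptional divisors.} For (2), the tangent space $T_{JV}\cA_5=\Sym^2 H^0(V,\Omega^2_V)^{\vee}$ and the tangent space $T_{V}\mathcal C$ is canonically $H^1(V,T_V)\cong H^0(V,\cO_V(3))/\langle \partial F/\partial x_i\rangle$, the space of infinitesimal deformations of the cubic. The normal space $N_{\mathcal C|\cA_5,V}$ is then computed via the codifferential of $\overline{\cP_6}$ as in \cite[Part V, \S 2]{ds}, and the standard (Griffiths) identification shows that its projectivisation is canonically $(\mathbb P^4)^{\vee}$, the space of hyperplanes through $V$. For (3), one performs the analogous deformation-theoretic computation at $(\tilde Q_l,Q_l)$: the normal space to $\cF$ at this point parametrises deformations of the covering fixing $JV$, and one shows that it is canonically $H^0(\Pi,\cO_{\Pi}(1))^{\vee}=(\mathbb P^2)^{\vee}$, the dual of the plane containing the quintic $Q_l$.

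\emph{Step 3: Describing the map $\tilde{\cF}\to\tilde{\mathcal C}$ on exceptional divisors.} Given a point $m$ in the projective tangent direction in $(\mathbb P^2)^{\vee}$ at $(\tilde Q_l,Q_l)$, I would read off the image via Lemma \ref{local_degree}: the linearised Prym map sends the infinitesimal deformation of the covering encoded by $m$ to the deformation of the cubic obtained by replacing $l$ by a nearby line in the plane $l\vee m\subset \mathbb P^4$. Under the identifications of Step 2 this is exactly the assignment $(\tilde Q_l,Q_l,m)\mapsto (V,l\vee m)$. The fact that the line $l\vee m$ is uniquely determined by the pair $(l,m)$ (a line $l$ plus a point $m\notin l$ in $\mathbb P^4$) yields the pointwise injectivity, and this in turn verifies the normal-bundle hypothesis of Lemma \ref{local_degree}.

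The main technical obstacle is Step 2: one must carry out the infinitesimal computation with enough care to identify the normal bundles canonically (not merely up to abstract isomorphism), since only a canonical identification makes the geometric description in Step 3 meaningful. Fortunately, this is precisely what is done in \cite[Part V, \S\S 1--2]{ds}, so the proof reduces to checking that the restriction of their arguments to the divisor $\Delta^n$ and to the locus $\cF$ goes through verbatim; everything is local along $\mathcal C$ and along $\cF$ and does not depend on the global structure of $\overline{\cR}_6$ beyond what has been set up above.
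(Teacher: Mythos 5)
Your proposal takes essentially the same route as the paper: the Proposition is stated there explicitly as ``a summary of the results in \S 1 and \S 2 in part V of \cite{ds}'', with no independent proof supplied, and your four steps are precisely an unpacking of what those sections of Donagi--Smith establish (fibre $=F(V)$, the two normal-space identifications, and the join map $m\mapsto l\vee m$ on exceptional divisors). The only caveat is a slip of phrasing in your Step 2: the normal space to $\cF$ in $\overline{\cR}_6$ parametrises first-order deformations of the covering \emph{modulo} those tangent to $\cF$ (so, directions moving the Prym off $\mathcal C$), not deformations ``fixing $JV$'' --- those lie in the kernel of the differential --- but this does not affect the substance, since the identification with $(\mathbb P^2)^{\vee}$ that you invoke is exactly the one computed in \cite{ds}.
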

\begin{corollary} Let $\mathcal S$ be the fibre of $\mathcal C$
by the map $\overline {\cP_6}_{|\Delta ^n}:\Delta^n\ra \cA_5$, and let $\tilde {\mathcal S}$ the exceptional divisor of the blowup of $\Delta^n$ along 
$\mathcal S$. Then the degree of $\overline {\cP_6}_{|\Delta ^n}$ equals the degree of the map $\tilde  {\mathcal S} \ra \tilde {\mathcal C}$ sending 
$(\tilde Q_l,Q_l,m)$ to $(V,l\vee m)$ 
\end{corollary}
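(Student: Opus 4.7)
The plan is to deduce the Corollary from Lemma \ref{local_degree} applied to the restricted map $\overline{\cP_6}|_{\Delta^n}$, using the blowup description already established in the preceding Proposition for the full Prym map $\overline{\cP_6}:\overline{\cR}_6\to \cA_5$.

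First I would note that since $\overline{\cP_6}|_{\Delta^n}$ is generically finite but has positive-dimensional fibres along $\mathcal S$ (each fibre being a copy of the curve $\Gamma\subset F(V)$), the total degree cannot be read off directly from these fibres, and the local degree formalism of \cite{ds} is the right tool. By construction $\mathcal S=\Delta^n\cap \cF$, so $\mathcal S$ is the scheme-theoretic intersection of $\cF$ with the Cartier divisor $\Delta^n\subset \overline{\cR}_6$. Consequently the blowup $\widetilde{\Delta^n}$ of $\Delta^n$ along $\mathcal S$ sits naturally inside the blowup $\widetilde{\overline{\cR}_6}$ of $\overline{\cR}_6$ along $\cF$, and the exceptional divisor $\widetilde{\mathcal S}$ coincides with the proper transform of $\Delta^n$ intersected with the exceptional divisor $\widetilde{\cF}$ of $\pi_1$. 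In particular, for each point $(\widetilde Q_l,Q_l)\in \mathcal S$ the fibre of $\widetilde{\mathcal S}\to \mathcal S$ is still identified with (the projectivization of) the tangent directions inside $\Delta^n$, which is a hyperplane in the $\mathbb{P}^2=\pi_1^{-1}(\widetilde Q_l,Q_l)$ of the proposition, or more naturally the full $\mathbb P^2$ after one verifies that $\Delta^n$ is transverse to $\cF$; either way the identification with the dual of the plane of $Q_l$ is preserved.

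Next I would check the hypothesis of Lemma \ref{local_degree}: the codifferential $d(\overline{\cP_6}|_{\Delta^n})$ must induce an injective linear map $N_{\mathcal S|\Delta^n,s}\to N_{\mathcal C|\cA_5, \overline{\cP_6}(s)}$ at every $s\in \mathcal S$. This is the key technical step. It follows from the corresponding injectivity proved in \cite{ds} Part V for the unrestricted Prym map $\overline{\cP_6}:\overline{\cR}_6\to \cA_5$ with respect to $(\cF,\mathcal C)$: since $N_{\mathcal S|\Delta^n,s}$ is a quotient (or subspace, depending on the transversality) of $N_{\cF|\overline{\cR}_6,s}$, and $N_{\mathcal C|\cA_5,\overline{\cP_6}(s)}$ is the same target, the restricted map inherits injectivity provided the vertical tangent directions to $\Delta^n$ are not collapsed by $d\overline{\cP_6}$. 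This last fact is exactly the statement that the map between exceptional divisors described in the Proposition (which is injective everywhere) remains injective after restriction to $\widetilde{\mathcal S}$.

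Once the hypothesis of Lemma \ref{local_degree} is verified, the local degree of $\overline{\cP_6}|_{\Delta^n}$ along $\mathcal S$ equals the degree of the induced map $\widetilde{\mathcal S}\to \widetilde{\mathcal C}$, and by the same local-global argument as in \cite{ds} (since $\mathcal S$ is the unique component of the fibre of $\mathcal C$ of top dimension $11$, the other possible components having dimension $\le 9$), this local degree coincides with the degree of $\overline{\cP_6}|_{\Delta^n}$. Finally, the explicit formula $(\widetilde Q_l,Q_l,m)\mapsto (V,l\vee m)$ is simply the restriction of the explicit description of the map $\widetilde{\cF}\to \widetilde{\mathcal C}$ given in part (4) of the Proposition. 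The main obstacle is the careful verification of the normal-bundle injectivity for the restricted map, for which one must check that the tangent directions to $\Delta^n$ transverse to $\mathcal S$ are not in the kernel of the codifferential of $\overline{\cP_6}$; this should reduce to an infinitesimal computation identical in spirit to the one carried out in \cite{ds}, using that the admissible-covering tangent space at a nodal plane quintic decomposes compatibly with the blowup.
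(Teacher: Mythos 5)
Your proposal is correct and follows essentially the same route as the paper: restrict the Donagi--Smith blowup picture to the closed subvariety $\Delta^n\subset\overline{\cR}_6$ (the paper does this in one line via the universal property of blowups, \cite[Corollary 7.15]{har}), check that the normal-bundle injectivity hypothesis of Lemma \ref{local_degree} is inherited by the restriction, and conclude that the degree is computed by the induced map $\tilde{\mathcal S}\ra\tilde{\mathcal C}$. Your more explicit discussion of why $N_{\mathcal S|\Delta^n,s}\hookrightarrow N_{\cF|\overline{\cR}_6,s}$ forces the restricted injectivity is exactly the content the paper compresses into that citation.
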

\begin{proof}
Since $\Delta ^n\subset \overline {\cR}_6$ is a closed immersion, the universal property of blowups (see \cite[Corollary 7.15]{har}) implies that the 
previous Proposition is valid for the restriction to $\Delta^n$. In particular,
the conditions of the Lemma \ref{local_degree} are satisfied and the local degree (hence the degree) can be computed looking at the map between 
exceptional divisors.  \end{proof}
Now we can prove the main result of this section: 
\begin{thm} \label{gen_injective}
 The Prym map $\cP_{5,2}: \mathcal R_{5,2}\ra \cA_5$ is generically injective. 
\end{thm}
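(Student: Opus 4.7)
The strategy is to combine the preceding Corollary, which identifies $\deg(\cP_{5,2}) = \deg(\overline{\cP_6}|_{\Delta^n})$ with the degree of the blown-up map
\[
\varphi \colon \tilde{\mathcal{S}} \lra \tilde{\mathcal{C}}, \qquad (\tilde{Q_l}, Q_l, m) \mapsto (V, l \vee m),
\]
with an explicit dimension count showing that $\varphi$ is generically of degree one onto its image.

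For a generic $V \in \mathcal{C}$, the preceding Proposition ensures that $\Gamma \subset F(V)$ is smooth and irreducible of dimension $1$. Fix such a $V$. The restriction $\tilde{\mathcal{S}}|_V$ is a $\PP^2$-bundle over $\Gamma$, hence of dimension $3$, while $\tilde{\mathcal{C}}|_V \cong (\PP^4)^\vee$ has dimension $4$. A point $(V,H)$ lies in the image of $\varphi$ exactly when the hyperplane $H \subset \PP^4$ contains some line $l \in \Gamma$. For each such $l \subset H$, the intersection $H \cap \Pi_l$ (where $\Pi_l$ is the plane disjoint from $l$ used to define $Q_l$) is a unique line $m$ in $\Pi_l$, and this is the only $m$ satisfying $l \vee m = H$. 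Therefore the fiber of $\varphi$ over $(V,H)$ is in bijection with
\[
\{\, l \in \Gamma : l \subset H \,\}.
\]

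The remaining step is to show that for a generic $(V, H)$ in the image this set is a singleton. The image of $\varphi|_V$ in $(\PP^4)^\vee$ is the $3$-dimensional locus of hyperplanes containing at least one line of $\Gamma$, swept out by the $\PP^2$-bundle $\tilde{\mathcal{S}}|_V$. On the other hand, the locus of hyperplanes $H$ containing two distinct lines of $\Gamma$ has dimension at most $2$: for a generic pair $(l_1, l_2) \in \Gamma \times \Gamma$ the two lines are skew and span a unique hyperplane $l_1 \vee l_2$, and for each fixed $l_1$ only finitely many $l_2 \in \Gamma$ meet $l_1$ (since $\Gamma$ is an irreducible curve), each such pair contributing a pencil $\PP^1$ of hyperplanes. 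In every case the total dimension is at most $2$. Hence a generic hyperplane in the $3$-dimensional image locus contains exactly one line of $\Gamma$, and $\varphi^{-1}(V,H)$ has cardinality one.

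Performing this fiberwise computation over a dense open of $\mathcal{C}$ yields $\deg(\varphi) = 1$, and therefore $\deg(\cP_{5,2}) = 1$, i.e.\ $\cP_{5,2}$ is generically injective. The main obstacle is the dimension count ruling out positive-dimensional families of hyperplanes containing two or more lines of $\Gamma$; this uses crucially the irreducibility of $\Gamma$ for general $V$ established earlier, as otherwise two components of $\Gamma$ could conceivably produce a larger family of multiply-incident hyperplanes and inflate the fiber.
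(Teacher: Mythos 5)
Your proposal is correct and follows essentially the same route as the paper: reduce via the corollary to showing that a generic hyperplane through a generic line of $\Gamma$ contains no second line of $\Gamma$, then bound the locus of multiply-incident hyperplanes by a dimension count on $\Gamma\times\Gamma$. The one point you assert rather than prove is that the image of $\varphi|_V$, i.e.\ the locus $\mathcal H\subset(\PP^4)^{\vee}$ of hyperplanes containing at least one line of $\Gamma$, really is $3$-dimensional; this is precisely the step the paper isolates (``it is enough to show that $\dim\mathcal H=3$'') and proves by observing that otherwise a generic $H\in\mathcal H$ would contain infinitely many, hence by irreducibility of $\Gamma$ all, lines of $\Gamma$, and intersecting three independent such hyperplanes forces $\Gamma$ to collapse. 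Your own skewness remark already fills this in (two skew lines of $\Gamma$ lie in a unique hyperplane, so the hyperplanes through a fixed $l_1$ cannot all contain a fixed skew $l_2$), but as written the conclusion that a generic point of the image is not multiply incident presupposes the $3$-dimensionality you have not established, so you should add that short argument.
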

\begin{proof}
 Due to the last corollary, the theorem reduces to check the following fact: Let $V$ be a generic smooth cubic threefold and let $l\subset V$ 
 a generic line 
 on $\Gamma $ (hence $Q_l$ is a quintic nodal curve). Given a a generic hyperplane $H$ in $\mathbb P^4$ containing $l$, there is no other line in 
 $\Gamma $ contained in $H$. 
 
 To prove this we consider 
 \[
\mathcal H_0=\{(l,H)\in \Gamma \times \mathbb P^{4*} \mid l\subset H \}.  
 \]
 This a $3$-dimensional closed subvariety, since all the fibres of $\mathcal H_0 \ra \Gamma $ are isomorphic to $\mathbb P^2$. The variety we are 
 interested in 
is $\mathcal H=p_2(\mathcal H_0)$. The image of the rational map
 \[
 \Gamma \times \Gamma \dasharrow \mathcal H                                                                                                                                                                                                                         
 \]
 sending $(l_1,l_2)$ to the hyperplane $l_1\vee l_2$ is a $2$-dimensional family $\mathcal H_2$  of hyperplanes contained in $\mathcal H$. To finish the proof 
of the theorem it is enough to show that $\dim \mathcal H=3$. Indeed, otherwise for a generic $H\in \mathcal H$ there are infinitely many lines of $\Gamma $  
contained in $H$, thus the whole  $\Gamma $ give lines in $H$. Taking three different linearly independent hyperplanes $H_1, H_2, H_3\in \mathcal H$ 
such that  all the lines  of $\Gamma $ are in the three hyperplanes we get a contradiction.
 \end{proof}

\vspace{0.7cm}
 
\appendix
\section{\\Geometry of $\cR_{2,6}$ via Kummer quartic surfaces \\ by Alessandro Verra}

In this appendix we describe the moduli space $\mathcal R_{2,6}$ via the geometry of the sextic models in $\mathbf P^4$ of a smooth, integral genus
two curve $C$ and of its naturally associated Kummer quartic surface.  Our goal is to derive from this some recreational geometry and to prove the 
following result.
 \begin{theorem} \label{rationality} 
 $\mathcal R_{2,6}$ is rational. 
 \end{theorem}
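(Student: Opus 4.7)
My plan is to realize $\cR_{2,6}$ birationally as a tower of projective bundles over a low-dimensional rational base, using the sextic model of $C$ in $\PP^4$ and the classical geometry of the associated Kummer quartic surface.

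First, I would pass to the sextic model. For generic $(C,\eta,B)$, Riemann--Roch gives $h^0(\eta^{\otimes 2})=5$ and, generically, $\eta^{\otimes 2}$ is very ample, so $|\eta^{\otimes 2}|$ embeds $C$ as a smooth sextic $C\subset\PP^4$ of genus $2$, with $B=C\cap H$ cut out by a transverse hyperplane. Conversely, such a sextic determines $\eta^{\otimes 2}=\cO_C(1)$ uniquely, and the sixteen square roots in the $J(C)[2]$-torsor $\Pic^3(C)$ give the sixteen admissible choices of $\eta$. This identifies $\cR_{2,6}$ birationally with the moduli of pairs $(C\subset\PP^4,H)/PGL_5$ refined by a choice of square root, and the dimension count $5+4=9$ matches $\dim\cR_{2,6}$.

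Second, I would attach to $C$ its Kummer quartic surface $K(C)=J(C)/{\pm 1}\subset\PP^3$, the image of $J(C)$ under $|2\Theta|$: a quartic with sixteen nodes in the classical $(16_6)$ configuration with sixteen tropes. The moduli of such Kummer quartics in $\PP^3$ coincides with $\cA_2\cong\cM_2$, a classical 3-dimensional rational variety. The key input is to realize the sextic $C\subset\PP^4$ and the hyperplane $H$ as explicit projective data on $K(C)$: in particular, the sixteen nodes of $K(C)$ should furnish sixteen rational sections of the Picard torsor $\Pic^3_{\cM_2}\to\cM_2$, birationally trivializing this fibration, while $H$ translates into a linear datum on $K(C)$ parametrized by the ambient $\PP^3$-coordinates.

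Finally, I would assemble $\cR_{2,6}$ as a tower of bundles over $\cM_2$: first the $\Pic^3$-fibration parametrizing $\eta$, birationally split by the Kummer-node sections, and then the $\PP^4$-bundle for $B\in|\eta^{\otimes 2}|$. Since $\cM_2$ is classically rational and both fibrations become Zariski-locally trivial after this splitting, the total space is rational. The main obstacle is exactly the abelian-surface structure of $\Pic^3_{\cM_2}\to\cM_2$: an abelian fibration over a rational base does not automatically have a rational total space and must be split birationally by external data. The sixteen nodes of the Kummer quartic surface are precisely what supplies this rigidifying information, reducing the problem to one in rational projective geometry.
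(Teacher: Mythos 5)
Your starting point (the sextic model $C\subset\PP^4$ given by $|\eta^{\otimes 2}|$ and the associated Kummer quartic) coincides with the paper's, but the core of your argument has a genuine gap at the step where you split the fibration in $\eta$. The sixteen nodes of the Kummer quartic do not furnish sixteen rational sections of $\Pic^3_{\cM_2}\to\cM_2$: they correspond to the sixteen square roots of $\eta^{\otimes 2}$ (equivalently, after a choice of base point, to the $2$-torsion of $J(C)$), so (i) their definition presupposes the degree-$6$ class you are trying to parametrize, and (ii) they are permuted by the monodromy of the family over $\cM_2$ and form a single irreducible multisection of degree $16$, not sixteen rational sections; a multisection does not birationally trivialize a torsor. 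Worse, even granting a rational section, an abelian surface fibration over a rational base is not thereby Zariski-locally trivial or rational: the rationality of the total space of $\Pic^3_{\cM_2}\to\cM_2$ (a $5$-fold) is exactly the kind of nontrivial question your reduction was supposed to eliminate. So the tower $\cM_2\leftarrow\Pic^3\leftarrow(\PP^4$-bundle$)$ collapses at its middle floor, and the concluding sentence ``both fibrations become Zariski-locally trivial after this splitting'' is unjustified precisely where it matters.

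The paper (Verra's appendix) circumvents this difficulty by linearizing the choice of $\eta$ rather than treating it as a point of an abelian torsor: the pencil $|\eta|$ is cut on $C$ by the ruling of planes of a rank-$3$ quadric $Q_\eta\supset C$, and $C=Q_\eta\cdot R$, where $R$ is the cubic scroll swept out by the chords $\langle x+\sigma(x)\rangle$. Fixing $R$ (unique up to projectivity) and the line $\ell=\Sing Q_\eta$, one obtains a dominant map $\PP^5_\ell\times\PP^{4*}\to\cR_{2,6}$, $(Q,H)\mapsto(Q\cdot R,\,\eta,\,H\cdot C)$, from the quadrics singular along $\ell$ times the hyperplanes; this is shown to be birationally the quotient by the stabilizer $G_\ell\cong\ZZ/2\ZZ$ of $R\cup\ell$, and the quotient descends to a $\PP^5$-bundle over the rational variety $\PP^{4*}/G_\ell$, hence is rational. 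To salvage your route you would need an independent proof that the universal $\Pic^3$ over $\cM_2$ is rational, which the Kummer nodes do not supply.
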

To perform the proof let us introduce some preliminaries. Recall that a point of $\mathcal R_{2,6}$ is defined by a triple $(C, \eta, b)$ where $C$ is a curve
as above, $\eta \in \Pic^3C$ and $b$ is a smooth element of  $\vert \eta^{\otimes 2} \vert$.   Since $\eta^{\otimes 2}$ has degree $6$ we have $\dim \vert
 \eta^{\otimes 2} \vert = 4$. Furthermore, a well known theorem of Mumford (\cite{mu1}) implies that $\eta^{\otimes 2}$ defines a projectively normal 
 embedding 
$$
C \subset \mathbf P^4,
$$
with ideal sheaf $\mathcal I_C$  generated by quadrics. The linear system $\vert \mathcal I_C(2) \vert$ is $3$-dimensional and reflects the properties of
the Kummer surface of $\Pic^0 C$. We recollect from \cite{br} some of these properties. First observe that  $C$ is contained in the scroll
$$
R = \cup \langle x + \sigma(x) \rangle \ , \ x \in C,
$$
where $\sigma$ is the involution on the covering exchanging the sheets over $C$.  So $R$ is either a cone over a rational normal cubic or a 
smooth cubic scroll,  biregular to $\mathbf P^2$ blown up at one point.  Notice that $R$ is a cone if and only if $C$ is 3-canonically embedded. 
In particular the moduli points of triples $(C, \eta, b)$ such that $R$ is not a cone fill up a dense open set of $\mathcal R_{2,6}$. Therefore,  it will
be not restrictive to assume that $R$ is smooth. \par   
The ideal sheaf $\mathcal I_R$ of $R$ is generated by quadrics and $\dim \vert \mathcal I_R(2) 
\vert = 2$. This implies that $C = R \cdot Q$, where 
$Q$ is a quadric smooth along $C$. Hence, by Bertini theorem, it follows that a general $Q \in \vert \mathcal I_C(2) \vert$ is smooth. Then the discriminant 
locus of $ \vert \mathcal I_C(2) \vert$ is a surface, namely the quintic surface
$$
D := \lbrace Q \in \vert \mathcal I_C(2) \vert \ \vert \ \Sing Q \neq \emptyset \rbrace.
$$
Notice that $D$ is not integral. Indeed, by Lefschetz hyperplane theorem, no quadric  containing $R$ is smooth. Hence the net $\vert \mathcal I_R(2) \vert$ 
is a plane contained in $D$. From now on we will use the following notation:
$$ 
\mathbf P^3 := \vert \mathcal I_C(2) \vert \quad \text {and} \quad \mathbf P^2 := \vert \mathcal I_R(2) \vert. 
$$  
We describe $\mathbf P^2$ explicitly. The surface $R$ contains a unique exceptional line that we will denote by $E$. We can associate to each point 
$v \in R$  a unique quadric $Q_v$ containing $R$ in the following way.  Let $p_v: R \to \mathbf P^3$ be the projection from $v$. Then $p_v(R)$ is a 
quadric surface and the cone of vertex $v$ over $p_v(R)$ is a quadric  $Q_v$ through $R$.  If  $Q'_v$  is  a second quadric through $R$ singular at $v$,
then $Q_v \cap Q'_v$ is a cone of vertex $v$ over a curve, which implies that  $R$ is 
a cone, so we get a contradiction. Therefore the assignment $v \to Q_v$ defines a morphism 
$$ 
\beta: R \to \mathbf P^2. 
$$
\begin{proposition} \label{contraction}
The map $\beta$ is the  contraction of $E$. Moreover either $v \not \in E$ and $\beta(v)$ has rank $4$ or $\beta(E)$ has rank $3$. In the latter 
case $E = \Sing \beta(E)$.    
\end{proposition}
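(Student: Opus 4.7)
The plan is to establish the claim in explicit coordinates. I realize $R$ as the image of the rational map $\phi : \mathbf P^2 \dashrightarrow \mathbf P^4$,
\[
[x_0:x_1:x_2] \mapsto [x_0^2 : x_0 x_1 : x_1^2 : x_0 x_2 : x_1 x_2],
\]
which is undefined only at $p = [0:0:1]$ and induces a biregular morphism from $\mathrm{Bl}_p\, \mathbf P^2$ onto $R$, sending the exceptional divisor onto the line $E = \{y_0 = y_1 = y_2 = 0\}$. The ideal $\mathcal I_R$ is generated by the three $2\times 2$ minors
\[
m_1 = y_0 y_2 - y_1^2, \qquad m_2 = y_0 y_4 - y_1 y_3, \qquad m_3 = y_1 y_4 - y_2 y_3
\]
of $\bigl(\begin{smallmatrix} y_0 & y_1 & y_3 \\ y_1 & y_2 & y_4 \end{smallmatrix}\bigr)$, so that $\mathbf P^2 = |\mathcal I_R(2)|$ has homogeneous coordinates $[\alpha_1 : \alpha_2 : \alpha_3]$ corresponding to $\alpha_1 m_1 + \alpha_2 m_2 + \alpha_3 m_3$.

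The key computation is to determine $\beta$ explicitly. Evaluating the five partial derivatives of $\alpha_1 m_1 + \alpha_2 m_2 + \alpha_3 m_3$ at $v = \phi([x_0:x_1:x_2])$ and factoring out the common factors $x_0$, $x_1$ reduces the system (for $(x_0, x_1) \neq (0,0)$) to
\[
\alpha_1 x_1 + \alpha_2 x_2 = 0, \qquad \alpha_2 x_0 + \alpha_3 x_1 = 0, \qquad \alpha_1 x_0 - \alpha_3 x_2 = 0,
\]
whose unique projective solution is $(\alpha_1 : \alpha_2 : \alpha_3) = (x_2 : -x_1 : x_0)$. Hence
\[
\beta\circ\phi([x_0:x_1:x_2]) = [x_2 : -x_1 : x_0],
\]
which is a linear automorphism of $\mathbf P^2$ carrying the center of the contraction $[0:0:1]$ to $[1:0:0]$. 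Thus $\beta$ is everywhere regular on $R$, coincides up to this linear change of coordinates with the contraction $R \cong \mathrm{Bl}_p\, \mathbf P^2 \to \mathbf P^2$, collapses $E$ to the single point corresponding to the quadric $Q_E := m_1 = y_0 y_2 - y_1^2$, and is an isomorphism on $R \setminus E$.

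The rank statements then follow by inspection. The quadric $Q_E = y_0 y_2 - y_1^2$ visibly has rank $3$, and the vanishing locus of its gradient is exactly $\{y_0 = y_1 = y_2 = 0\} = E$, giving $\Sing Q_E = E$. For $v \notin E$, assembling the symmetric matrix of $Q_v = x_2 m_1 - x_1 m_2 + x_0 m_3$ and row reducing (using that at least one of $x_0, x_1$ is nonzero) shows that its kernel is one-dimensional and spanned by $v$ itself, forcing $Q_v$ to have rank $4$ with $\Sing Q_v = \{v\}$. The only genuinely technical step is the linear-algebraic reduction leading to the formula $(\alpha_1:\alpha_2:\alpha_3) = (x_2:-x_1:x_0)$; the rest is routine verification.
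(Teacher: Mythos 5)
Your proof is correct, but it follows a genuinely different route from the paper's. Verra's argument is synthetic: for $v \in R$ the projection $p_v(R)$ is the elementary transformation of the ruled surface $R$ centered at $v$ (citing Hartshorne--Hirschowitz), hence a smooth quadric surface when $v \notin E$ and a rank-$3$ quadric when $v \in E$; combined with the observation that $R$ lies on a unique rank-$3$ quadric, this forces $\beta$ to be constant on $E$ and gives the rank statements. You instead fix the standard parametrization of the smooth cubic scroll (legitimate without loss of generality, since $R$ is unique up to projectivity --- a fact the paper itself invokes later), present the net $\vert \mathcal I_R(2)\vert$ by the three minors, and solve the gradient system explicitly, arriving at the closed formula $\beta\circ\phi = [x_2:-x_1:x_0]$. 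This buys more than the paper's sketch: an explicit identification of $\beta$ as the blow-down followed by a linear automorphism, and an independent re-derivation of the uniqueness of the quadric in the net singular at a given $v$ (which the paper proves separately by the cone argument). Two spots are slightly loose but harmless. First, your reduction of the five gradient equations to three involves dividing by $x_0$ and by $x_1$ separately, so when exactly one of them vanishes you should note that the solution set is still the single point $(x_2:-x_1:x_0)$ --- a direct check confirms this in each degenerate case. Second, the value of $\beta$ at points of $E$ (where $\phi$ is undefined) needs to be matched against the pointwise definition rather than obtained purely by continuity; this is settled by your own observation that $m_1 = y_0y_2 - y_1^2$ is singular along all of $E$, together with the immediate verification that it is the only member of the net singular at any point of $E$. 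With those remarks supplied, the argument is complete and fully rigorous.
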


\begin{proof} Since the arguments are standard we only sketch the proof.  Let $v \in R$ then $p_v(R)$ is obtained via the elementary transformation of 
 center $v$ of the ruled surface $R$ ( \cite{hh}). Hence $p_v(R)$ is a smooth quadric surface if $v \not \in E$. If $v \in E$ then $p_v(R)$ has rank $3$, 
 $p_v(R)$ is singular at $p_v(E)$ and $\Sing Q_v = E$. Now it is easy to see that there is a unique rank $3$ quadric $Q_E$ containing $R$. Hence we
  have  $Q_v = Q_E$ and $\beta$ is constant on $E$. Then the statement follows.
 \end{proof}
 To continue the study of $D$ we consider  the natural involution
 $$ 
 s: \Pic^3 C \to \Pic^3C 
 $$ 
defined by  $s(L) := \eta^{\otimes 2}(-L)$,  so  $\Pic^3 C / \langle s \rangle$ is
biregular to the Kummer surface of $\Pic^0 C $.  Let $L \in \Pic^3 C$ then $\dim \vert L \vert = 1$ and each $d \in \vert L \vert$ satisfies $\dim \langle d 
\rangle = 2$. Otherwise  we would have $ \dim \vert \eta^{\otimes 2}(-d) \vert \geq 2$, which is impossible. This defines a quadric through $C$, namely
$$
Q_L := \bigcup_{d \in \vert L \vert} \langle d \rangle,
$$
the union of the planes generated by the divisors in $|L|$.  Let 
$$
u: \Pic^3 C  \to \mathbf P^3, \qquad  L \mapsto Q_L
$$
and $S := u(\Pic^3 C)$. Since $Q_L$ is ruled by planes, then $Q_L$ is singular and hence $u(\Pic^3 C) \subset D$.
 \begin{lemma} 
 At a general $Q_L$ the fibre of $u: \Pic^3 C \to S$ is $\lbrace L, s(L) \rbrace$. 
 \end{lemma}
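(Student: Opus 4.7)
The plan splits into two assertions: that $s(L)$ always lies in the fibre over $Q_L$, and that for a general $L$ no further element of $\Pic^3 C$ maps to $Q_L$.

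For the first assertion, I would fix arbitrary $d \in |L|$ and $d' \in |s(L)|$. By definition of $s$ we have $d+d' \in |\eta^{\otimes 2}|$, and since $\eta^{\otimes 2}$ is the embedding linear system this divisor is cut on $C$ by a unique hyperplane $H \subset \mathbf{P}^4$. Because $H$ contains the three points of $d$ it contains $\langle d \rangle$, so the quadric surface $Q_L \cap H$ in $H \cong \mathbf{P}^3$ contains a plane and by Bezout splits as $\langle d \rangle \cup \Pi$ for a unique residual plane $\Pi$. Computing the residual intersection with $C$ gives $\Pi \cap C = d'$, and as $d'$ spans a plane (for $d'$ in general position), one reads off $\Pi = \langle d' \rangle$. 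Letting $d'$ range over $|s(L)|$ exhibits every plane $\langle d' \rangle$ inside $Q_L$, so $Q_{s(L)} \subseteq Q_L$; since both are irreducible quadric hypersurfaces, equality follows.

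For the second assertion, the plan is to use the structure of $Q_L$ as a quadric in $\mathbf{P}^4$. For $L$ outside the finite locus where $L \cong s(L)$ (equivalent to $L^{\otimes 2} \cong \eta^{\otimes 2}$, i.e.\ $L \otimes \eta^{-1}$ is $2$-torsion), the two pencils $\{\langle d \rangle\}_{d \in |L|}$ and $\{\langle d' \rangle\}_{d' \in |s(L)|}$ on $Q_L$ are genuinely distinct. A singular quadric in $\mathbf{P}^4$ of rank at most $3$ is a cone with a line of vertices over a smooth conic, and carries only one one-parameter family of planes; hence $Q_L$ must have rank exactly $4$. It is therefore a cone with point vertex over a smooth quadric $\mathbf{P}^1 \times \mathbf{P}^1 \subset \mathbf{P}^3$, and such a cone carries exactly two one-parameter families of planes, which must be the two pencils above. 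Now if $L' \in \Pic^3 C$ satisfies $Q_{L'} = Q_L$, the pencil $\{\langle e \rangle\}_{e \in |L'|}$ is a one-parameter family of planes on $Q_L$, so it coincides with one of the two rulings; since the linear system reconstructs $L'$, one concludes $L' \in \{L, s(L)\}$.

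The main technical point is the identification $\Pi = \langle d' \rangle$ in the first step: it requires $\Pi \cap C$ to equal $d'$ as a scheme-theoretic divisor of degree $3$ and $d'$ to span a $2$-plane (excluding, say, colinear triples). Both are open conditions on the pair $(d,d') \in |L| \times |s(L)|$, automatically satisfied once $L$ is chosen generically, which is all the statement asks for.
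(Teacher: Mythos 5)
Your proposal is correct and follows essentially the same route as the paper: the identity $Q_L=Q_{s(L)}$ is deduced from $L\otimes s(L)\cong\mathcal O_C(1)$, and the fibre is pinned down by reconstructing the line bundle from a ruling of planes, using that $|L|$ and $|s(L)|$ are distinct pencils for general $L$. You merely make explicit two points the paper leaves implicit, namely the hyperplane-residuation argument for $Q_{s(L)}\subseteq Q_L$ and the fact that a rank-$4$ quadric cone in $\mathbf P^4$ carries exactly two rulings of planes.
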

\begin{proof}  From $L \otimes s(L) \cong \mathcal O_C(1)$ it easily follows that $Q_L = Q_{s(L)}$. Let $\vert L \vert$ be a base-point-free pencil, 
then  $L$ is uniquely  reconstructed from the ruling of planes $\langle d \rangle$ of $Q_L$. Since for a general $L \in \Pic^3 C$, $\vert L \vert$ and 
$\vert s(L) \vert$ are base point-free distinct pencils, the statement follows. 
\end{proof}
To complete the description of $S$ we consider the natural theta divisor 
$$ 
\Theta := \lbrace M \in \Pic^3 C \ \vert \ M \cong \omega_C(x), \ x \in C \rbrace 
$$
and its conjugate by the involution $s$
$$
\Theta_{\eta} := s(\Theta) =  \lbrace \eta^{\otimes 2}(-M) \ , \ M \in \Theta \rbrace.
$$
Observe that $M \in \Theta$ if and only if $\vert M \vert$ is base-point-free.
\begin{theorem}  \ 
\begin{enumerate}
\item It holds $u^* \mathbf P^2 = \Theta + \Theta_{\eta}$;
\item the linear system $\vert \Theta + \Theta_{\eta} \vert$ defines the map $u: \Pic^3C \to \mathbf P^3$;
\item the surface $S$ is the Kummer quartic model of $\Pic^3 C / \langle s \rangle$.
\end{enumerate}
\end{theorem}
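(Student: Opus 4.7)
The plan is to prove Part 1 by a direct geometric argument combined with a numerical comparison on the abelian surface $\Pic^3 C$, and then deduce Parts 2 and 3 quickly from this identification.

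For the set-theoretic core of Part 1, I would take $L = \omega_C(x) \in \Theta$ and observe that the pencil $|L| = x + |\omega_C|$ has $x$ as base point; because $C$ is hyperelliptic, every $D \in |\omega_C|$ has the form $y + \sigma(y)$ for the hyperelliptic involution $\sigma$ of $C$. Hence each $d = x + y + \sigma(y) \in |L|$ spans a plane $\langle d \rangle$ containing the ruling $\langle y + \sigma(y)\rangle$ of $R$, and taking the union over $y \in C$ yields $R \subset Q_L$. Thus $u(\Theta) \subset \mathbf P^2$, and by the preceding Lemma (which states $Q_L = Q_{s(L)}$, equivalently that $u$ factors through the involution $s$) also $u(\Theta_\eta) \subset \mathbf P^2$. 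Since both $\Theta$ and $\Theta_\eta$ are irreducible divisors contained in the support of $u^*\mathbf P^2$, this yields the divisorial inequality $u^*\mathbf P^2 \geq \Theta + \Theta_\eta$.

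To promote this to equality, I would write $u^*\mathbf P^2 = a\Theta + b\Theta_\eta + D'$ with $a,b \geq 1$ and $D'$ effective without components equal to $\Theta$ or $\Theta_\eta$, and compare classes in $\mathrm{NS}(\Pic^3 C)$. Since $\Theta_\eta$ is a translate of $\Theta$, one has $[\Theta + \Theta_\eta] = 2[\Theta]$. On the other hand, $u$ factors through the degree-$2$ quotient $\Pic^3 C \to K := \Pic^3 C/\langle s\rangle$, so $(u^*\mathcal O(1))^2 = 2\deg S$; combined with the classical fact that the Kummer model of $K$ in $\mathbf P^3$ is a quartic surface (giving $\deg S = 4$), this forces $[u^*\mathcal O(1)] = 2[\Theta]$. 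The effectivity of $D'$ and ampleness of $\Theta$ then imply $a = b = 1$ and $D' = 0$, completing Part 1.

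Parts 2 and 3 follow quickly. Part 1 identifies $u^*\mathcal O(1) \cong \mathcal O(\Theta + \Theta_\eta)$, and Riemann--Roch on the principally polarized abelian surface $\Pic^3 C$ gives $h^0(2\Theta) = 4 = \dim H^0(\mathbf P^3, \mathcal O(1))$. Since the image $S$ has degree $4$ it cannot lie in a plane, so $u$ is non-degenerate and the map $u^* : H^0(\mathbf P^3, \mathcal O(1)) \to H^0(\Pic^3 C, \mathcal O(\Theta + \Theta_\eta))$ is an isomorphism of four-dimensional spaces; hence $u$ is defined by the complete linear system $|\Theta + \Theta_\eta|$, proving Part 2. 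For Part 3, the factorization through $K$ together with Part 2 realizes the induced map as the classical Kummer embedding by $|2\Theta|$, whose image is by construction the Kummer quartic surface. The main technical obstacle is the numerical identification $[u^*\mathcal O(1)] = 2[\Theta]$ in Part 1 without circular appeal to Parts 2 and 3; the key is to combine the degree-$2$ Kummer factorization provided by the preceding Lemma with the classical description of the Kummer surface of $\Pic^3 C/\langle s\rangle$ as a quartic in $\mathbf P^3$.
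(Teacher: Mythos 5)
Your set-theoretic step is fine and is in fact a clean alternative to the paper's: you check directly that for $L=\omega_C(x)\in\Theta$ every divisor $d=x+y+\sigma(y)\in|L|$ spans a plane containing the ruling line $\langle y+\sigma(y)\rangle$ of $R$, so $R\subset Q_L$ and $u(\Theta\cup\Theta_\eta)\subset\mathbf P^2$, giving $u^*\mathbf P^2\ge\Theta+\Theta_\eta$. (The paper argues in the opposite direction, classifying which rank $4$ and rank $3$ quadrics of $\mathbf P^2$ lie in $S$ to obtain $u^{-1}(\mathbf P^2)=\Theta\cup\Theta_\eta$; either inclusion suffices once the numerical step is in place.) The problem is the numerical step itself. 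To force equality you need an \emph{upper} bound on $[u^*\mathcal O(1)]$, equivalently on $\deg S$, and you obtain $\deg S=4$ by invoking ``the classical fact that the Kummer model of $K$ in $\mathbf P^3$ is a quartic.'' That is circular: statement (3) is precisely the assertion that $S$ is that Kummer model, and the factorization of $u$ through the degree $2$ quotient $\Pic^3C\to K$ does not by itself identify the polarization on $K$ inducing the map to $\mathbf P^3$, nor does it exclude $u^*\mathcal O(1)\equiv 2\Theta+D'$ with $D'>0$ effective, in which case $h^0(u^*\mathcal O(1))>4$, the map would be given by an incomplete $4$-dimensional subsystem, and $\deg S=\tfrac12\left(2\Theta+D'\right)^2>4$ with no contradiction. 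You flag this as the main obstacle, but your proposed fix does not remove it.

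The missing ingredient is already set up in the paper just before the theorem: $S$ lies in the discriminant locus $D$ of the web $\mathbf P^3=|\mathcal I_C(2)|$ of quadrics through $C\subset\mathbf P^4$ (each $Q_L$ is ruled by planes, hence singular), $D$ is a quintic hypersurface, and $D$ also contains the plane $\mathbf P^2=|\mathcal I_R(2)|$ since no quadric through $R$ is smooth. Hence $\deg S\le 5-1=4$. Combined with $\deg u=2$ (the preceding Lemma) and your lower bound $u^*\mathcal O(1)\ge\Theta+\Theta_\eta$, this gives $8\le(u^*\mathcal O(1))^2=2\deg S\le 8$, whence $\deg S=4$, $D'=0$ and $u^*\mathbf P^2=\Theta+\Theta_\eta$; your ampleness argument for killing $D'$ and your deductions of (2) and (3) then go through as written. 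So the architecture of your proof is sound, but the circular appeal must be replaced by the degree bound coming from the discriminant quintic.
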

\begin{proof}  (1) Let $Q \in \mathbf P^2$ be of rank 4 then $\Sing Q$ is a point $o \in R$. Consider the projection $p_o: R \to \mathbf P^3$ from $o$, then
 $p_o(R)$ is a smooth quadric and one of its rulings is the image of the ruling of lines of $R$.  If $o \not \in C$ then $p_o(C)$ is a curve of type (2,4) in 
 $p_o(R)$ and the two rulings of planes of $Q$ cut $\vert \omega_C \vert$
and a base point free pencil of degree 4 on $C$. Hence $Q$ is not in $S$.  If $o \in C$ then $Q = u(L)$, where $L $ is
$\omega_C(o)$ and then $L \in \Theta$ and $s(L) \in \Theta_{\eta}$.  Secondly,  let $Q$ be of rank 3 then $\Sing Q \cap C = \lbrace o_1, o_2 \rbrace$ 
and one has $Q = u(L)$,  where 
$ \lbrace L, s(L) \rbrace = \lbrace \omega_C(o_1), \omega_C(o_2) \rbrace \subset \Theta$. The discussion implies  $ u^{-1}(\mathbf P^2) = \Theta \cup 
\Theta_{\eta}$.
Now $u: \Pic^3C \to S$ is a morphism of degree 2 and $\deg S \leq 4$, since $D$ has degree 5. This easily implies $\deg S = 4$ and $u^* \mathbf P^2 = 
\Theta + \Theta_{\eta}$.  
This shows (1) and (1) implies (2) and (3). \end{proof}

It is well known that $\Sing S$ consists of sixteen nodes. In our construction these are the images of the fixed points of $s$, in other words
$$
\Sing S = \lbrace Q_L \ \vert \ L^{\otimes 2} \cong \mathcal O_C(1) \rbrace.
$$
Since $L = s(L)$ the quadric $Q_L$ has a unique ruling of planes and rank $3$. Hence,  $Q_{\eta} \in \Sing S$ since  $s(\eta) = \eta$, and the rank $3$
quadric $Q_{\eta}$ defines a node of $S$.

For a general triple $(C, \eta, b) \in \mathcal R_{2,6}$ is not restrictive to assume $\eta \not \in \Theta$. Then $\vert \eta \vert$ is base point free and 
$\Sing Q_{\eta} \cap C = \emptyset$, where $\Sing Q_{\eta}$ is a line. By Proposition \ref{contraction} this implies that  
$$
C = Q_{\eta} \cdot R.
$$
Since $R$ is unique in $\mathbf P^4$ up to projective automorphisms, we fix it. Then a general pair $(C, \eta)$ is defined
via the above construction by a rank $3$ quadric not containing $R$.  \par
\begin{proof}[Proof of Theorem~\ref{rationality}]
Let $G(1,4)$ be the Grassmannian of lines of  $\mathbf P^4$; we use the same notation for a line $\ell \subset \mathbf P^4$ and its parameter 
in $G(1,4)$. Let $\mathcal I_{\ell}$ be the ideal sheaf of $\ell$. Then $\vert \mathcal I^2_{\ell}(2) \vert$ is $5$-dimensional. We set 
$$ 
\mathbf P^5_{\ell} := \vert \mathcal I^2_{\ell}(2) \vert .
$$ 
This linear system parametrizes all the quadrics singular along $\ell$ and its general
member has rank $3$. Let $\ell \in G(1,4)$ and $Q \in \mathbf P^5_{\ell}$ be general. Then $Q$ defines a smooth curve $C := Q \cdot R$. 
 By the previous remarks $C$ is a general curve of genus $2$. Moreover, the ruling of planes of $Q$ cuts on $C$ a pencil $\vert \eta \vert$, where 
 $\eta \in \Pic^3C$ and  $\eta^{\otimes 2} \cong \mathcal O_C(1)$. Let $\mathbf P^{4*} = \vert \mathcal O_{\mathbf P^4}(1) \vert$. Then a general 
 $(Q,H) \in \mathbf P^5_{\ell} \times \mathbf P^{4*}$ defines a triple $(C, \eta, b)$ where $b := H \cdot C$, with  $(C, \eta)$ is constructed as above. 
This defines a rational map
$$
f_{\ell}: \mathbf P^5_{\ell} \times \mathbf P^{4*} \to \mathcal R_{2,6}
$$
sending $(Q,H)$ to the moduli point of $(C, \eta,b)$. Notice that $f_{\ell}$ is a map of varieties of the same dimension, depending of course on $\ell$. 
\par
Let $G_{\ell}\subset \Aut \mathbf P^4$ be the stabilizer of $R \cup \ell$. It is clear that $G_{\ell}$ acts linearly on the 
two factors of $\mathbf P^5_{\ell} \times \mathbf P^{4*}$. Then we consider its action on this product, sending $(Q,H)$ to $a(Q,H) := (a(Q), a(H))$. 
First we show that $f_{\ell}$ 
is birationally  the quotient map of this action.
\par  Assume $f_{\ell}(p_1) = f_{\ell}(p_2)$ for two general points $p_i = (Q_i, H_i)$, $i = 1, 2$. Let $C_i := Q_i \cdot R$ and let $\eta_i \in \Pic C^3_i$ 
be the line bundle defined by the ruling of $Q_i$. Then $C_1$, $C_2$ are biregular to the same general curve $C$ and $\eta_1$, $\eta_2$ are isomorphic.
This implies that there exists $a \in \Aut \mathbf P^4$ such that $a(C_1) = C_2$ and $a(Q_1) = Q_2$. We can assume that $\mathcal O_{C_i}(1)$ is 
general in $\Pic^6 C_i$ so that the stabilizer of $C_i$ in $\Aut \mathbf P^4$ is trivial. Let $b_i = C_i \cdot H_i$ then it follows $a(b_1) = b_2$ and $a(H_1) = H_2$. Moreover, notice that 
 the ruling of lines of $R$ is invariant by $a$, since $a(C_1) = C_2$. Hence $a(R) = R$. Since $a(Q_1) = a(Q_2)$ then $a(\ell) = \ell$. This implies that 
 $a \in G_{\ell}$ and that $(Q_2, H_2) = a(Q_1, H_2)$. 
 Since $\mathbf P^5 \times \mathbf P^{4*}$ and $\mathcal R_{2,6}$ are irreducible of the same dimension, it follows that $\mathbf P^5 \times 
 \mathbf P^{4*} / G_{\ell}$ is birational to $\mathcal R_{2,6}$. \par
Finally, we claim that $G_{\ell}$ is isomorphic to $\mathbb Z/2\mathbb Z$ for a general $\ell$.  Therefore
$\mathbf P^5 \times \mathbf P^{4*}$ descends to a $\mathbf P^5$-bundle over the quotient space $\mathbf P^4 / G_{\ell}$
and the latter is  rational.  
\end{proof}

\begin{proof} [Proof of the claim]
The stabilizer of $R$ in $\Aut \mathbf P^4$ is isomorphic to $\Aut R$. Since $R$ is the blow-up of $\mathbf P^2$ at a point $o$, 
$\Aut R$ is isomorphic to the  stabilizer of $o$ in $\Aut \mathbf P^2$. Notice that $\Aut R$ is $6$-dimensional and acts on the $6$-dimensional variety
 $G(1,4)$. Recall that we have a map
$$
\beta: R \to \vert \mathcal I_R(2) \vert
$$
such that $\beta(v)$ is the unique $Q_v \in \vert \mathcal I_R(2) \vert$ singular at $v$. By Proposition \ref{contraction} $\beta$ is the contraction of 
the exceptional  line $E \subset R$ and $o := \sigma(E)$ is the parameter point of the unique rank $3$ quadric $Q_E$ containing $R$. Let  
$$
r_{\ell}: \vert \mathcal I_R(2) \vert \to \vert \mathcal O_{\ell}(2) \vert
$$
be the restriction map. We define the non empty open sets:
\medskip \par
\begin{enumerate}
\item $U_1 := \lbrace \ell \in G(1,4) \ \vert \ r_{\ell} \ \text {\it is an isomorphism} \rbrace$,
\item $U_2 := \lbrace \ell \in G(1,4) \ \vert \ \text{ \it $\ell$ is transversal to $Q_E$} \rbrace$
\end{enumerate}
and prove our claim for $\ell \in U_1 \cap U2$. Let $\Delta_{\ell} := r_{\ell}^*\Delta$ be the pull back of the diagonal conic $\Delta := \lbrace 2x \mid \ x \in \ell 
\rbrace$. Then $\Delta_{\ell}$  is a conic in $\vert \mathcal I_R(2) \vert$, more precisely it is the family of quadrics  through $R$ which are tangent to $\ell$.  
Let $a \in G_{\ell}$ then $a(Q_E\cap \ell) = Q_E \cap \ell$. Since $Q_E$ is transversal to 
$\ell$ it follows that   $$
Q_E \cap \ell = \lbrace x_1 , x_2 \rbrace
$$
with $x_1 \neq x_2$. Therefore the parameter point $o$ of $Q_E$ is not in $\Delta_{\ell}$. It is also clear that the induced automorphism $a_*: \vert 
\mathcal I_R(2) \vert \to \vert \mathcal I_R(2) \vert$ leaves $\Delta_{\ell}$ invariant. Let $Q_v \in \vert \mathcal I_R(2) \vert$ be singular at $v$ then 
$Q_{a(v)} = a(Q_v)$ and this equality implies that $a_{|R}: R \to R$ is uniquely defined by $a_*$. Since $a_{|R}$ is the identity if and only if
$a$ is, it follows that $a$ is uniquely reconstructed  from $a_*$. \par
Now $a_*(o) = o$  implies $a_*(L) = L$ and $a_*(T_1 \cup T_2) = T_1 \cup T_2$, where $L$ is the polar line of $o$ with respect to $\Delta_{\ell}$ 
and $T_1,  T_2$ are the tangent lines to
$\Delta_{\ell}$ at the two points of $L \cap \Delta_{\ell}$. Let $P$ be the pencil of conics which are tangent to $\Delta_{\ell}$ at $L \cap \Delta_{\ell}$. 
Then $a_*$ acts on $P$ fixing each of the
elements $\Delta_{\ell}$, $T_1 + T_2$,  $2L$. Hence $a_*$ acts as the identity on $P$. To conclude just recall that the group of planar automorphisms
acting  on a pencil like $P$ as the
identity is isomorphic to $\mathbb Z/ 2\mathbb Z$. Indeed, it is generated by the involution leaving each conic of $P$ invariant, with $o$ and the points of 
$L$ fixed. Hence $G_{\ell}$ is generated by $a$, where $a_*$ is such an involution.
 \end{proof}

%\begin{rem}
 %In \cite{ps} it is proved that the image of $\mathcal R_{2,6}$ is invariant by the natural involution in $\mathcal A^ {(1,1,2,2)}$. They also studied 
 %some  other natural (unirational) divisors in this moduli space.
%\end{rem}

\end{document}